\newtheorem{thm}{Theorem} \newtheorem{lemma}{Lemma}  \newtheorem{coro}{Corollary}
\definecolor{red1}{rgb}{1,0.9,0.9} \definecolor{blue1}{rgb}{0.9,0.9,1} \definecolor{green1}{rgb}{0.9,1,0.9} 
\definecolor{yellow1}{rgb}{1,1,0.9} \definecolor{yellow2}{rgb}{1,1,0.8}
\def\question#1{ \vspace{2mm} \begin{center} \parbox{11.2cm}{{\bf Question:} #1} \vspace{2mm} \end{center} }
\let\paragraph\subsection
\def\B#1#2{{#1\choose #2}}
\title{On a Dehn-Sommerville functional for simplicial complexes}
\author{Oliver Knill}
\date{May 29, 2017}
\address{Department of Mathematics \\ Harvard University \\ Cambridge, MA, 02138 }
\subjclass{05C50, 57M15, 68R10}
\keywords{Mass gap}
\begin{document}
\maketitle

\begin{abstract}
Assume $G$ is a finite abstract simplicial complex with $f$-vector $(v_0,v_1, \dots)$,
and generating function $f(x) = \sum_{k=1}^{\infty} v_{k-1} x^k=v_0 x + v_1 x^2+ v_2 x^3 + \cdots$,
the Euler characteristic of $G$ can be written as $\chi(G)=f(0)-f(-1)$. We study here
the functional $f_1'(0)-f_1'(-1)$, where $f_1'$ is the derivative of the 
generating function $f_1$ of $G_1$. The Barycentric refinement $G_1$
of $G$ is the Whitney complex of the finite simple graph for which the faces of $G$ 
are the vertices and where two faces are connected if one is a subset of the other. 
Let $L$ is the connection Laplacian of $G$, which is $L=1+A$, where $A$ is the adjacency 
matrix of the connection graph $G'$, which has the same vertex set than $G_1$
but where two faces are connected they intersect. We have $f_1'(0)={\rm tr}(L)$ 
and for the Green function $g=L^{-1}$ also $f_1'(-1)={\rm tr}(g)$ so that 
$\eta_1(G) = f_1'(0)-f_1'(-1)$ is equal to $\eta(G)={\rm tr}(L-L^{-1})$. The 
established formula 
${\rm tr}(g)=f_1'(-1)$ for the generating function of $G_1$ complements the determinant 
expression ${\rm det}(L)={\rm det}(g)=\zeta(-1)$ for the Bowen-Lanford zeta function 
$\zeta(z)=1/{\rm det}(1-z A)$ of the connection graph $G'$ of $G$. We also establish a
Gauss-Bonnet formula $\eta_1(G) = \sum_{x \in V(G_1)} \chi(S(x))$, 
where $S(x)$ is the unit sphere of $x$ the graph generated by all vertices 
in $G_1$ directly connected to $x$. Finally, we point out that
the functional $\eta_0(G) = \sum_{x \in V(G)} \chi(S(x))$ on graphs 
takes arbitrary small and arbitrary large values on every homotopy type of graphs. 
\end{abstract}

\section{Setup}

\paragraph{}
A {\bf finite abstract simplicial complex} $G$ is a finite set of non-empty sets with 
the property that any non-empty subset of a set in $G$ is in $G$. The elements in $G$ are called 
{\bf faces} or {\bf simplices}. Every such complex defines two finite simple graphs $G_1$ and $G'$, 
which both have the same vertex set $V(G_1)=V(G')=G$. For the graph $G_1$, two vertices are 
connected if one is a subset of the other; in the graph $G'$, two faces are connected, if they intersect. 
The graph $G_1$ is called the {\bf Barycentric refinement} of $G$; 
the graph $G'$ is the {\bf connection graph} of $G$. The graph $G_1$ is a subgraph of $G'$ which shares
the same topological features of $G$. On the other hand, the connection graph is fatter and
be of different topological type: already the Euler characteristic $\chi(G)$ and $\chi(G')$ can differ. 
Both graphs $G_1$ and $G$ are interesting on their own but they are linked in various ways as we hope to
illustrate here. Terminology in this area of combinatorics is rich. One could stay within simplicial complexes
for example and deal with ``flag complexes", complexes which is a Whitney complex of its $1$-skeleton graphs.
The complexes $G_1$ and $G'$ are by definition of this type. We prefer in that case to use terminology of graph
theory. 

\paragraph{}
Let $A$ be the adjacency matrix of the connection graph $G'$.
Its Fredholm matrix $L=1+A$ is called the {\bf connection Laplacian} of $G$. We know that $L$ is 
unimodular \cite{Unimodularity} so that the {\bf Green function operator} $g=L^{-1}$ has integer entries.
This is the {\bf unimodularity theorem} \cite{Helmholtz}. The Bowen-Lanford zeta function of the graph $G'$ is
defined as $\zeta(s) = {\rm det}((1-sA)^{-1})$. As $\zeta(-1)$ is either $1$ or $-1$, we can see the determinant
of $L$ as the value of the zeta function at $s=-1$. 
We could call $H=L-L^{-1}$ the {\bf hydrogen operator} of $G$. The reason is that
classically, if $L=-\Delta$ is the Laplacian in $R^3$, then $L^{-1}$ is an integral operator with
entries $g(x,y) = 1/|x-y|$. Now, $H \psi(y) = (L \psi)(y) - \psi(y)/|x-y|$ is the Hamiltonian of a Hydrogen
atom located at $x$, so that $H$ is a sum of a kinetic and potential part, where the potential is determined
by the inverse of $L$. When replacing the multiplication operation with a convolution operation, then $L^{-1}$ 
takes the role of the potential energy. Anyway, we will see that the trace of $H$ is an 
interesting variational problem.

\paragraph{}
There are various variational problems in combinatorial topology or
in graph theory. For the later, see \cite{BollobasExtremal}. An example in polyhedral combinatorics 
is the upper bound theorem, which characterizes the maxima of the discrete volume among all 
convex polytopes of a given dimension and number of vertices \cite{Stanley1996}. 
An other example problem is to maximize the Betti number $b(G)=\sum_{i=0} b_i$
which is bounded below by $\chi(G)=\sum_{i=0} (-1)^i b_i $ which we know to grow 
exponentially in general in the number of elements in $G$ and for which upper bounds
are known too \cite{Adamaszek}.
We have looked at various variational problems in \cite{KnillFunctional} and at higher order 
Euler characteristics in \cite{valuation}. Besides extremizing functionals on geometries, one can 
also define functionals on the on the set of unit vectors of the Hilbert space $H^n$ generated
by the geometry. An example is the free energy $(\psi,L\psi) - T S(|\psi|^2)$ which uses 
also entropy $S$ and temperature variable $T$ \cite{Helmholtz}. 

\paragraph{}
Especially interesting are functionals which characterize geometries.
An example is a necessary and sufficient condition for a $f$-vector of a simplicial d-polytope
to be the $f$-vector of a simplicial complex polytope, conjectured 1971 and proven
in 1980 \cite{BilleraLee,Stanley1980}. Are there variational conditions which filter out
discrete manifolds? We mean with a discrete manifold a connected finite abstract 
simplicial complex $G$ for which every unit sphere $S(x)$ in $G_1$ is a sphere. The notion of 
sphere has been defined combinatorially in discrete Morse approaches using critical points 
\cite{forman95} or discrete homotopy \cite{I94a}. A $2$-complex for example is a discrete 
$2$-dimensional surface. In a 2-complex, we ask that every unit sphere in $G_1$ is a circular graph 
of length larger than $3$. For a 2-complex the $f$-vector of $G_1$ obviously satisfies 
$2 v_1-3v_2=0$ as we can count the number of edges twice by adding up 3 times the number of triangles. 
The relation $2v_1-3v_2=0$ is one of the simplest Dehn-Sommerville relations. 
It also can be seen as a zero curvature condition for 
$3$-graphs \cite{cherngaussbonnet} or then related to eigenvectors to Barycentric refinement 
operations \cite{valuation,KnillBarycentric2}. Dehn-Sommerville relations can be seen
as zero curvature conditions for Dehn-Sommerville invariants in a higher dimensional complex.

\paragraph{}
One can wonder for example whether a condition like $\eta(G) = 2v_1-3v_2=0$ for the $f$-vector $(v_0,v_1,v_2)$ 
of the Barycentric refinements $G_1$ of a general $2$-dimensional abstract finite simplicial complex $G$ 
forces the graph $G_1$ to have all unit spheres to be finite unions of circular graphs. 
For this particular functional, this is not the case. There are examples of discretizations of 
varieties with $1$-dimensional set of singular points for which $2v_1-3v_2$ is negative. 
An example is $C_n \times F_8$, the Cartesian product of
a circular graph with a figure $8$ graph. An other example is a $k$-fold suspension of a circle 
$G=C_n + P_k$, where $C_n$ is the circular graph, $P_k$ the $k$ vertex graph without edges and $+$ is the
Zykov join which takes the disjoint union of the graphs and connects additionally any two vertices from different
graphs. In that case, $\eta_0(G)=n(2-k)$ which is zero only in the discrete manifold case $k=2$
where we have a discrete 2-sphere, the suspension of a discrete circle. 
% F[n_,k_]:=Eta0[MyJoin[CycleGraph[n],PointGraph[k]]] == n(2-k)

\paragraph{}
Our main result here links a spectral property with a combinatorial property. It builds on 
previous work on the connection operator $L$ and its inverse $g=L^{-1}$. 
We will see that $\eta(G_1)={\rm tr}(L-L^{-1})$, where $L$ is the connection 
Laplacian of $G$, which remarkably is always invertible. 
If $G$ has $n$ faces=simplices=sets in $G$, the matrix $L$ is a $n \times n$
matrix for which $L_{xy}=1$ if $x$ and $y$ intersect and where $L_{xy}=0$ if $x \cap y$ is empty. 
We establish that the {\bf combinatorial functional} $\eta(G_1)=2v_1-3v_2 +4v_3 -5 v_4 + \dots$
which is also the {\bf analytic functional} $f'(0)-f'(-1)$ for a generating function 
$f(x)=\sum_{k=1}^{\infty} v_{k-1} x^k$ is the same than the {\bf algebraic functional} 
${\rm tr}(L-L^{-1})$ and also equal to the 
{\bf geometric functional} $\eta_1(G) = \sum_{x \in V(G_1)} \chi(S(x))$. The later
is a Gauss-Bonnet formula which in general exists for any linear or multi-linear valuation 
\cite{valuation}. 

\paragraph{}
The functional $\eta_1$ is a valuation like the Euler characteristic $\chi(G) = v_0-v_1+v_2- \dots$
whose combinatorial definition can also be written as $f(0)-f(-1)$ or as a Gauss-Bonnet formula
$\sum_x K(x)$ or then as the super trace of a heat kernel ${\rm tr}(e^{-t L})$ by McKean-Singer
\cite{knillmckeansinger}. The Euler curvature $K(x)=\sum_{k=0}^{\infty} (-1)^k v_{k-1}(S(x))/(k+1)$
\cite{cherngaussbonnet} could now be written as $K(x)=F(0)-F(-1)$, where 
$F(t)=\int_0^t f(s) \; ds = \sum_{k=0}^{\infty} v_{k-1} x^{k+1}/(k+1)$
is the anti-derivative of the {\bf reduced generating function} $1-f$ of $S(x)$. We see a common theme
that $F(0)-F(-1), f(0)-f(-1), f'(0)-f'(-1)$ all appear to be interesting.
\begin{comment}
   s=ErdoesRenyi[18,0.4]; V=VertexList[s]; {EulerChi[s],Sum[-CGF[UnitSphere[s,V[[k]]],-1],{k,Length[V]}]}
\end{comment}

\paragraph{}
Euler characteristic is definitely the most fundamental valuation as it is related 
to the unique eigenvector of the eigenvalue $1$ of the Barycentric refinement operator. It also has
by Euler-Poincar\'e a cohomological description $b_0-b_1+b_2- \dots$ in terms of Betti numbers.
The minima of the functional $G \to \chi(G)$ however appear
difficult to compute \cite{eveneuler}. From the expectation formula
${\rm E}_{n,p}[\chi] = \sum_{k=1}^n (-1)^{k+1} \B{n}{k} p^{\B{k}{2}}$
\cite{randomgraph} of $\chi$ on Erd\"os-Renyi spaces we know that unexpectedly
large or small values of $\chi(G)$ can occur, even so we can not construct them directly.
As the expectation of $\chi = b_0-b_1+b_2 - \dots$ grows exponentially with the number of 
vertices. Also the total sum of Betti numbers grows therefore exponentially even so the
probabilistic argument gives no construction. We have no idea to construct a complex
with 10000 simplices for which the total Betti number is larger than say $10^{100}$ even so
we know that it exists as there exists a complex $G$ for which $\chi(G)$ is larger
than $10^{100}$. Such a complex must be a messy very high-dimensional Swiss cheese. 

\paragraph{}
After having done some experiments, we first felt that $\eta(G)$ must be non-negative. 
But this is false. In order to have negative Euler
characteristic for a unit sphere of a two-dimensional complex, we need already to have some
vertex for which $S(x)$ is a bouquet of spheres. 
A small example with $\eta(G)<0$ is obtained by taking a sphere, then glue in a disc into the
inside which is bound by the equator. This produces a geometry 
$G$ with Betti vector $(1,0,2)$ and Euler characteristic $3$. It satisfies $\eta(G)=-8$
as every of the $8$ vertices at the equator of the Barycentric refinement of $G_1$ has
curvature $\chi(S(x))=-1$ and for all the other vertices have $\chi(S(x))=0$. 

\begin{figure}[!htpb]
\scalebox{0.1}{\includegraphics{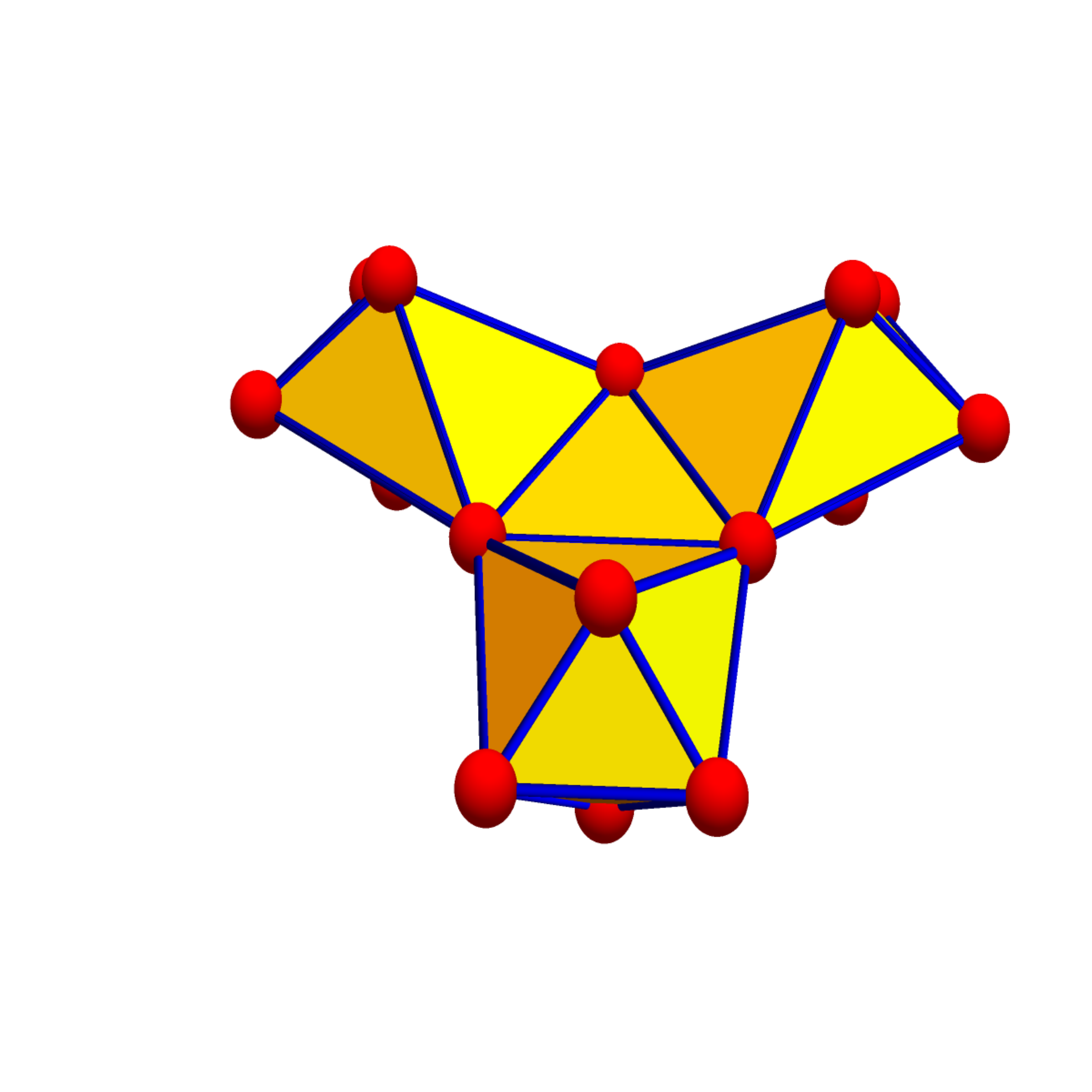}}
\scalebox{0.1}{\includegraphics{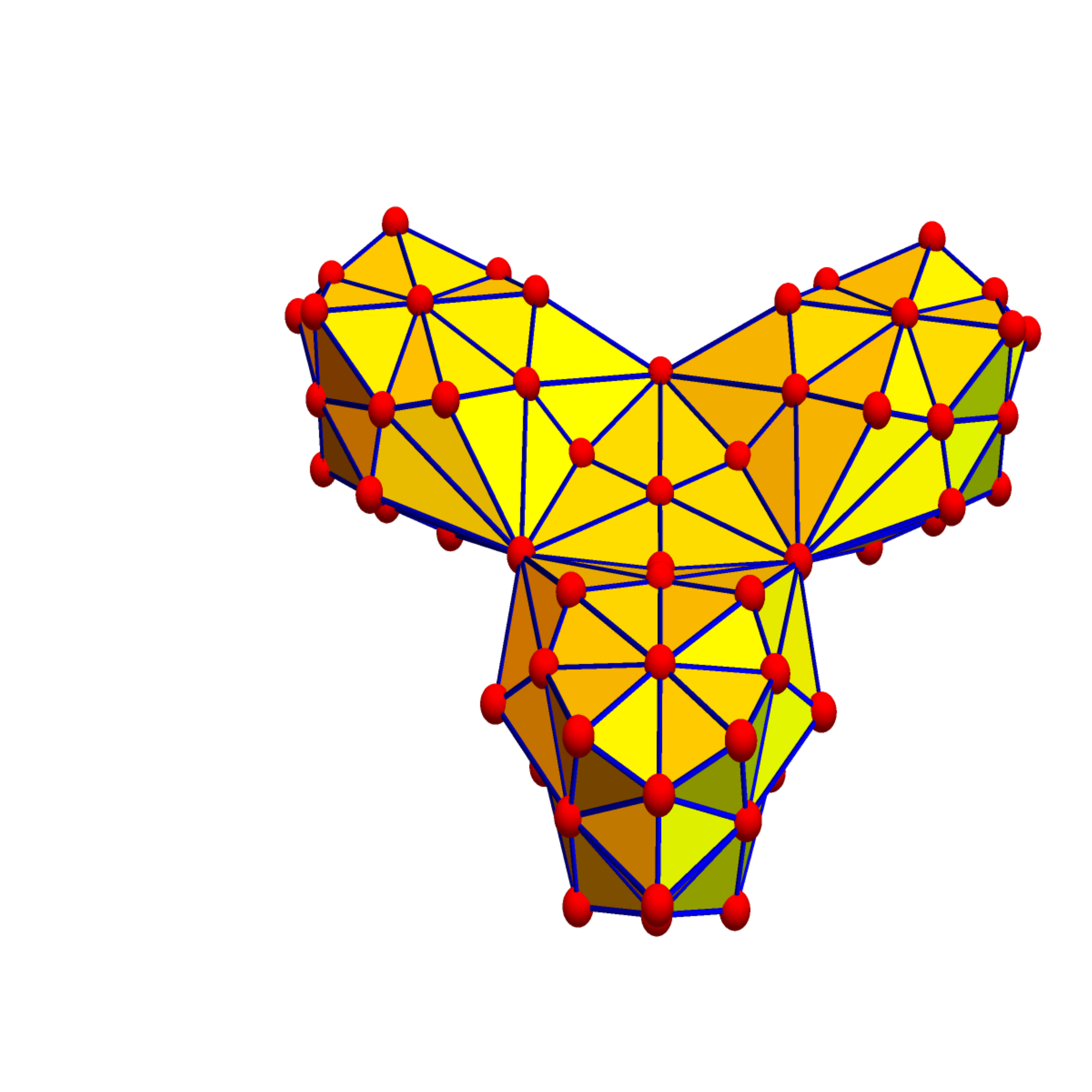}}
\caption{
A $2$-dimensional complex with $\eta(G)=-6<0$. The $f$-vector is
$(v,e,f)=(15,36,25)$, the Betti numbers are $b_0=1,b_1=0,b_2=3$, the
Euler characteristic $v-e+f=b_0-b_1+b_2=4$.
In this case, $2e-3f=(15,36,25) \cdot (0,2,-3)=-3$.
To the right, we see the refinement with $f$-vector
$(76,222,150)$ and $\eta(G_1)=2 \cdot 222 - 3 \cdot 150=-6$.
In general, the value doubles under refinement. 
}
\end{figure}

\paragraph{}
This example shows that $\eta(G)$ can become arbitrarily 
small even for two-dimensional complexes. But what happens in this example
there is a one dimensional singular set. It is the circle along which the 
disk has been glued between three spheres. We have not yet found an example
of a complex $G$ for which $G_1$ has a discrete set of singularities (vertices
where the unit sphere is not a sphere.) In the special case where $G$
is the union a finite set of geometric graphs with boundary in such a way that
the intersection set is a discrete set, then $\eta(G) \geq 0$. 

\section{Old results}

\paragraph{}
Given a face $x$ in $G$, it is also a vertex in $G_1$. The {\bf dimension} ${\rm dim}(x) = |x|-1$
with cardinality $|x|$ now defines a function on the vertex set of $G_1$. It is locally injective and 
so a {\bf coloring}. We know already $g(x,x) = 1-\chi(S(x))$ \cite{Spheregeometry} and that 
$V(x) = \sum_y g(x,y) = (-1)^{{\rm dim}(x)} g(x,x)$
is {\bf curvature}: $\sum_x V(x)=\chi(G)$. It is dual to the curvature $\omega(x) = (-1)^{{\rm dim}(x)}$ 
for which Gauss-Bonnet $\sum_x \omega(x)$ is the definition of Euler characteristic. Both of these
formulas are just Poincar\'e-Hopf for the gradient field defined by the function  ${\rm dim}$. The
Gauss-Bonnet formula $\sum_x V(x)= \chi(G)$ can be rewritten as $\sum_{x,y} g(x,y) = \chi(G)$. 
We call this the {\bf energy theorem}. It tells that the total potential energy of a simplicial 
complex is the Euler characteristic of $G$. By the way, $\sum_{x,y} L(x,y) = |V(G')|+2 |E(G')|$
by Euler handshake. 

\paragraph{}
If $v_k$ counts the number of $k$-dimensional faces of $G$, then $f(x)=\sum_{k=1}^{\infty} v_{k-1} x^k = 
v_0 x + v_1 x^2 + \dots $ is a {\bf generating function} for the $f$-vector $(v_0,v_1, \dots)$ of $G$. 
We can rewrite the Euler characteristic of $G$ as $\chi(G)=-f(-1)=f(0)-f(-1)$. If $G$ is a graph, we
assume it to be equipped with the {\bf Whitney complex}, the finite abstract simplicial complex 
consisting of the vertex sets of the complete subgraphs of $G$. This in particular applies for the
graph $G_1$. The $f$-vector of $G_1$ is obtained from the $f$-vector of $G$ by applying the matrix
$S_{ij} = i! S(j,i)$, where $S(j,i)$ are {\bf Stirling numbers} of the second kind. Since the 
transpose $S^T$ has the eigenvector 
$(1,-1,1,-1, \dots)$, the Euler characteristic is invariant under the process of taking
Barycentric refinement. Actually, as $S$ has simple spectrum, it is up to a constant the unique 
valuation of this kind. Quantities which do not change under Barycentric refinements are called 
{\bf combinatorial invariants}. 

\paragraph{}
The matrices $A,L,g$ act on a finite dimensional Hilbert space whose dimension is the 
number of faces in $G$ which is the number of vertices of $G_1$ or $G'$. Beside the usual 
trace ${\rm tr}$ there is now a {\bf super trace} ${\rm str}$ defined as 
${\rm str}(L)= \sum_x \omega(x) L(x,x)$ with $\omega(x) = (-1)^{{\rm dim}(x)}$. 
The definition $\chi(G) = \sum_x \omega(x)$ can now be written as ${\rm str}(1)= \chi(G)$. 
Since $L$ has $1$'s in the diagonal, we also have ${\rm str}(L)= \chi(G)$. A bit less obvious
is $\chi(G) = {\rm str}(g)$ which follows from the Gauss-Bonnet analysis leading to the
energy theorem. It follows that the Hydrogen operator $H$ satisfies ${\rm str}(H)=0$,
the super trace of $H$ is zero. This leads naturally to the question about the trace of $H$. 
By the way, the super trace of the Hodge Laplacian $L=(d+d^*)^2$ where $d$ is the exterior
derivative is always zero by Mc-Kean Singer (see \cite{knillmckeansinger} for the discrete case). 

\paragraph{}
The Barycentric refinement graph $G_1$ and the connection graph $G'$ have appeared also
in a number theoretical setup. If $G$ is the countable complex consisting of all finite subsets
of prime numbers, then the finite prime graph $G_1(n) \subset G_1$ has as vertices all square-free integers in
$V(n)=\{2,3,4 \dots ,n\}$, connecting two if one divides the other. The prime connection graph $G'(n)$
has the same vertices than $G_1(n)$ but connects two integers if they have a common factor larger than $1$.
This picture interprets sets of integers as simplicial complexes and sees
counting as a Morse theoretical process \cite{CountingAndCohomology}.
Indeed $\chi(G_1(n)) = 1-M(n)$, where $M(n)$ is the Mertens
function. If the vertex $n$ has been added, then $i(n)=1-\chi(S(n))=-\mu(n)$ with M\"obius function
$\mu$ is a Poincar\'e-Hopf index and $\sum_x i(x)=\chi(G_1(x))$ is a Poincar\'e-Hopf formula. In
combinatorics, $-i(G)=\chi(G)-1$ is called the {\bf reduced Euler characteristic} \cite{Stanley86}.
The counting function $f(x)=x$ is now a discrete Morse function, and each vertex is a
critical point. When attaching a new vertex $x$, a handle of dimension $m(x)={\rm dim}(S(x))+1$ is added.
Like for critical points of Morse functions
in topology, the index takes values in $\{-1,1\}$ and $i(x)=(-1)^{m(x)}$. For the
connection Laplacian adding a vertex has the effect that the determinant gets multiplied by $i(x)$.
Indeed, ${\rm det}(L)=\prod_x \omega(x)$ in general while $\chi(G) = \sum_x \omega(x)$,
if $\omega(x) = (-1)^{{\rm dim}(x)}$.
%  Table[HydroSphere[PrimeGraph[k]],{k,5,100}]

\section{The functional}

\paragraph{}
Define the functional 
$$   \eta(G) = {\rm tr}(H) = {\rm tr}(L - L^{-1})\; . $$ 
Due to lack of a better name, we call it the {\bf hydrogen trace}. We can rewrite this 
functional in various ways. For example $\eta(G) = \sum_k \lambda_k -1/\lambda_k$, 
where $\lambda_k$ are the eigenvalues of $L$. 
We can also write $\eta(G) = \sum_k \mu_k \frac{2+\mu_k}{1+\mu_k}$ where $\mu_k$ are the
eigenvalues of the adjacency matrix $A$ of the connection graph $G'$. 
It becomes interesting however as we will be able to link $\eta$ explicitly 
with the $f$-vector of the complex $G_1$ or even with the $f$-vector of the complex $G$ itself.

\paragraph{}
We will see below that also the {\bf Green trace functional} ${\rm tr}(g)$ is interesting
as $g=L^{-1}$ is the Green function of the complex. It is bit curious that there are
analogies and similarities between the Hodge Laplacian $H  = (d+d^*)^2$ of a complex and the 
connection Laplacian $L$. Both matrices have the same size. As they work on a space of
simplices, where the dimension functional defines a parity, one can also look at the
{\bf super trace} 
${\rm str}(L) = \sum_{{\rm dim}(x)>0} L_{xx} - \sum_{{\rm dim}(x)<0} L_{xx}$
It is a consequence of Mc-Kean Singer super symmetry that 
${\rm str}(H) = 0$
which compares with the definition ${\rm str}(L)=\chi(G)$ and leads to the McKean
Singer relation ${\rm str}(e^{-tH})=\chi(G)$. We have seen however the Gauss-Bonnet relation
${\rm str}(g)  = \chi(G)$
which implies the energy theorem $\sum_{x,y} g_{xy} = \chi(G)$.  It also implies 
${\rm str}(L-g) = 0$. 

\paragraph{}
The invertibility of the connection Laplacian is interesting and lead to topological
relations complementing the topological relations of the Hodge Laplacian to topology
like the Hodge theorem telling that the kernel of the $k$'th block of $H$ is isomorphic
the $k$'th cohomology of $G$. Both $L$ and $H$ have deficits: 
we can not read off cohomology of $L$ but we can not invert $H$, the reason for the
later is exactly cohomology as harmonic forms are in the kernel of $H$. 
So, there are some complementary benefits of both $L$ and $H$. And then there
are similarities like ${\rm str}(H) = {\rm str}(L-L^{-1})=0$ and 
${\rm str}(e^{-H}) = {\rm str}(L^{-1}) = \chi(G)$. 

\begin{comment}
\paragraph{}
We ask below whether $\eta(G) \geq 0$ if $G$ is geometric with isolated singularities. This
can be relevant to understand more about the eigenvalues of $L$. 
We know that the trace functional is monotone. That is if $\phi(x)$ is a monotone function, then 
${\rm tr}(L) \to {\rm tr}(\phi(L))$ is monotone. This also holds for the piecewise monotone
$\phi(x)=x-1/x$ if $L$ is invertible and self adjoint. As the adjacency matrix $A$ of $L=1+A$ is
not positive semi definite, this prevents concluding ${\rm tr}(L-1/L) \geq 0$. 
\end{comment}

\section{Gauss-Bonnet}

\paragraph{}
The following {\bf Gauss-Bonnet} theorem for $\eta$ shows that its curvature at a face 
$x$ is the Euler characteristic of the unit sphere $S(x)$ in the Barycentric refinement 
$G_1$. We use the notation $\eta_0(G) = \sum_{x \in V(G)} \chi(S(x))$, $\eta_1(G)=\eta_0(G_1)$
and $\eta(G)={\rm tr}(L-L^{-1})$. 

\begin{thm}
Let $G$ be an arbitrary abstract finite simplicial complex. Then 
$$  {\rm tr}(L-L^{-1}) = \eta(G) = \eta_1(G) = \eta_0(G_1) = \sum_{x \in V(G_1)} \chi(S(x)) \; . $$
\end{thm}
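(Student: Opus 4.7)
The plan is to reduce the chain of equalities to a single substantive identity. The equalities $\eta(G) = \text{tr}(L - L^{-1})$ and $\eta_1(G) = \eta_0(G_1) = \sum_{x \in V(G_1)} \chi(S(x))$ are tautological from the definitions given in the preceding section, so the real content is the algebraic-to-geometric identity
\[
\text{tr}(L - L^{-1}) = \sum_{x \in V(G_1)} \chi(S(x)).
\]

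To prove this, I would compute the two traces separately. Since $L = 1 + A$ and the connection graph $G'$ is simple (its adjacency matrix $A$ has zero diagonal), every diagonal entry of $L$ equals $1$ and hence $\text{tr}(L) = |V(G_1)|$, the total number of faces of $G$. For $g = L^{-1}$, I would invoke the pointwise formula $g(x,x) = 1 - \chi(S(x))$ already recorded in the ``Old results'' section, which is a consequence of the unimodularity theorem. Summing over $x \in V(G_1)$ yields
\[
\text{tr}(L^{-1}) = \sum_{x \in V(G_1)} \bigl(1 - \chi(S(x))\bigr) = |V(G_1)| - \sum_{x \in V(G_1)} \chi(S(x)).
\]
Subtracting the two expressions, the cardinalities cancel and the claimed identity drops out.

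Because the deep inputs — the unimodularity of $L$ and the closed formula for $g(x,x)$ — are imported from prior work, there is essentially no obstacle here. The only delicate point is bookkeeping: confirming that the vertex sum is indexed by $V(G_1)$, i.e.\ by faces of $G$, which is the set on which $L$ and $g$ act, and keeping straight the sign convention in the Green function identity. As a sanity check matching the analytic form announced in the abstract, one may expand the right-hand side combinatorially via the incidence count $\sum_{x} v_j(S(x)) = (j+2)\,v_{j+1}(G_1)$ (each $(j+1)$-simplex of $G_1$ contributes through each of its $j+2$ vertices), obtaining
\[
\sum_{x \in V(G_1)} \chi(S(x)) = 2 v_1(G_1) - 3 v_2(G_1) + 4 v_3(G_1) - \cdots,
\]
which is precisely $f_1'(0) - f_1'(-1)$ after differentiating the generating function $f_1$ and evaluating at $0$ and $-1$.
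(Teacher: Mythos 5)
Your proof is correct and follows essentially the same route as the paper: both arguments combine $\mathrm{tr}(L)=|V(G_1)|$ (since $L$ has unit diagonal) with the imported identity $g(x,x)=1-\chi(S(x))$ and subtract. Your write-up is in fact more careful than the paper's own two-line proof, and the closing combinatorial sanity check is a nice bonus, though not needed for the statement itself.
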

\begin{proof} 
The diagonal elements of $g=L^{-1}$ has entries $(1-\chi(S(x))$. 
We therefore have have ${\rm tr}(g) = \sum_x (1-\chi(S(x))$. We also
have ${\rm tr}(G) = \sum_x 1$. 
\end{proof} 

{\bf Examples.} \\
{\bf 1)} If $G=C_n$, then $G_1=C_{2n}$. Now, $\chi(S(x))=2$ for all vertices $x \in G_1$. 
We see that $\eta(C_n) = 4n$.  \\ % HydroTr[CycleGraph[5]] == 20 \\
{\bf 2)} For a discrete two-dimensional graph $G$, a graph for which every unit sphere is a circular
graph, we have $\eta(G) = 0$. \\
{\bf 3)} For a discrete three-dimensional graph $G$, a graph for which every unit sphere is a two 
dimensional sphere, we have $\eta(G) = 2 V(G_1) = 2 \sum_{k=0}^{\infty} v_k(G)$.   For
example, for the 3-sphere, the suspension of the octahedron, which can be written as
$G=3 P_2 = P_2+P_2+P_2$, we have $\eta(G) = 160$ because the $f$-vector of $G$ is 
$\vec{v} = (8, 24, 32, 16)$. \\
% HydroTr[threesphere] == 2 Total[Fvector[s]] 
{\bf 4)} For a graph without triangles, we have $\eta(G) = \sum_{x \in V(G_1)} {\rm deg}(x)$
which is by handshaking $2 v_1(G_1)$. Since Barycentric refinement doubles the edges, 
we have $\eta(G)=4 v_1(G)$. This generalizes the circular case discussed above. \\
{\bf 5)} For $G=K_n$ we have $\eta(G) = 2^{n}$ if $n$ is even and $2^n-2$ if $n$ is odd. The 
numbers start as following: $\eta(K_1)=0$ $\eta(K_2)=4$, 
$\eta(K_3)=6$, $\eta(K_4)=16$, $\eta(K_5)=30$ etc. % Table[HydroTr[CompleteGraph[k]],{k,8}]

\begin{figure}[!htpb]
\scalebox{0.1}{\includegraphics{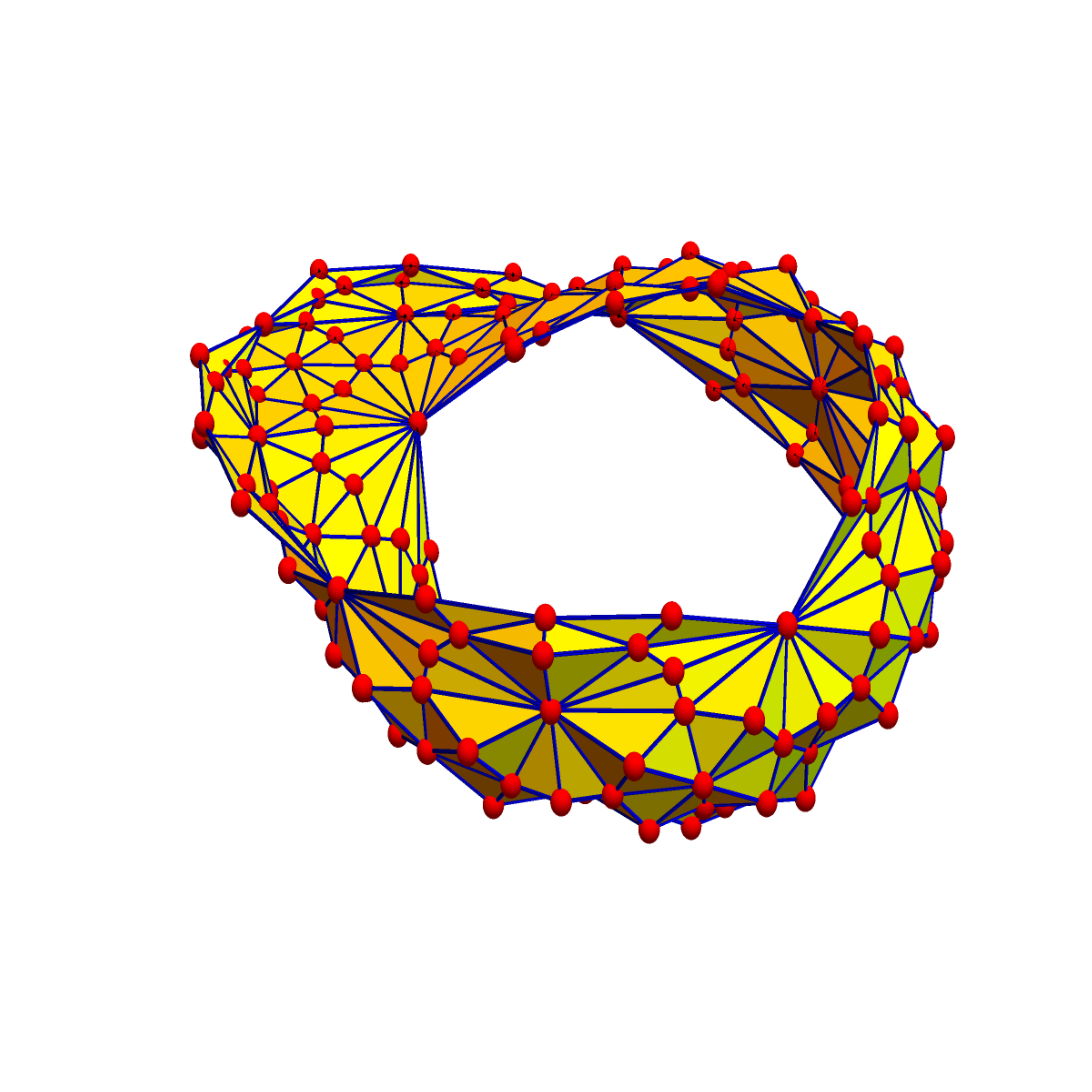}}
\caption{
The M\"obius strip is a $2$-dimensional graph with connected boundary. 
As $\chi(S(x))$ is zero in the interior and $\chi(S(x))=1$ at the boundary 
we see that $\eta(G)$ is the length of the boundary. 
In the displayed example of the discrete Moebius strip, we have $\eta(G)=32$.
}
\end{figure}

\section{Generating function}

\paragraph{}
Let $f_{G_1}(x) = 1+v_1 x + v_2 x^2 + \dots = 1+\sum_{k=1}^{\infty} v_{k-1} x^k$ be the 
(reduced) generating function for the Barycentric refinement $G_1$ of $G$. The {\bf Zykov join}
of two graphs $G_1 + H_1$ is defined as the graph with vertex set $V(G_1) \cup V(H_1)$
for which two vertices $a,b$ are connected if they were connected in $G_1$ or $H_1$ or if
$a,b$ belong to different graphs. The generating function of the sum $G_1+H_1$
is the product of the generating functions of $G_1$ and $H_1$.  \\

Since the Euler characteristic satisfies 
$\chi(G) = f(0)-f(-1) = \chi(G_1) = f_1(0)-f_1(-1)$, the following again
shows that the functional $\eta$ appears natural; 

\begin{coro}
$\eta(G) = f_1'(0)-f_1'(-1)$. 
\end{coro}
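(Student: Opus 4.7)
The plan is to reduce the corollary to the Gauss--Bonnet identity already proved in the theorem, namely $\eta(G) = \sum_{x \in V(G_1)} \chi(S(x))$, and then match the right-hand side to $f_1'(0) - f_1'(-1)$ by a direct expansion of the generating function combined with a double-counting argument.

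First I would expand $f_1(x) = \sum_{k=1}^\infty v_{k-1}(G_1)\, x^k$, differentiate termwise, and evaluate at $0$ and $-1$. This gives $f_1'(0) = v_0(G_1)$ and $f_1'(-1) = \sum_{k \geq 1} (-1)^{k-1} k\, v_{k-1}(G_1) = v_0 - 2 v_1 + 3 v_2 - \cdots$, so that
\begin{equation*}
  f_1'(0) - f_1'(-1) = 2 v_1(G_1) - 3 v_2(G_1) + 4 v_3(G_1) - \cdots = \sum_{k \geq 1} (-1)^{k+1}(k+1)\, v_k(G_1).
\end{equation*}
Thus the corollary is equivalent to the identity $\sum_{x \in V(G_1)} \chi(S(x)) = \sum_{k \geq 1} (-1)^{k+1}(k+1)\, v_k(G_1)$.

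Next I would prove this identity by double counting. Writing $\chi(S(x)) = \sum_{k \geq 0} (-1)^k v_k(S(x))$ and summing over $x$, I interchange sums to obtain $\sum_{k \geq 0} (-1)^k \sum_{x} v_k(S(x))$. The inner sum counts pairs $(x,\sigma)$ with $x$ a vertex of $G_1$ and $\sigma$ a $k$-simplex of $S(x)$. Since $G_1$ is a flag (Whitney) complex, such a $\sigma$ together with $x$ forms a $(k+1)$-simplex $\tau \subset G_1$ containing $x$, and conversely every $(k+1)$-simplex $\tau$ of $G_1$ has exactly $k+2$ vertices, each of which can play the role of $x$. Hence $\sum_{x} v_k(S(x)) = (k+2)\, v_{k+1}(G_1)$, and reindexing $k \mapsto k-1$ yields exactly the alternating expression above.

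Finally, combining this with the theorem gives
\begin{equation*}
  \eta(G) = \sum_{x \in V(G_1)} \chi(S(x)) = \sum_{k \geq 1} (-1)^{k+1}(k+1)\, v_k(G_1) = f_1'(0) - f_1'(-1),
\end{equation*}
as claimed. The only delicate step is the double-counting bookkeeping, in particular keeping track of signs and the off-by-one arising from the shift between ``dimension of $\sigma$ in $S(x)$'' and ``dimension of $\tau = \sigma \cup \{x\}$ in $G_1$''; the rest is a direct manipulation of the generating function.
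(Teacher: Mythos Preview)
Your proof is correct and follows essentially the same route as the paper: both start from the Gauss--Bonnet identity $\eta(G)=\sum_{x\in V(G_1)}\chi(S(x))$, then use the handshake/double-counting argument that a $k$-simplex in $S(x)$ together with $x$ forms a $(k{+}1)$-simplex of $G_1$ with $k{+}2$ vertices, yielding $\sum_x \chi(S(x)) = 2v_1 - 3v_2 + 4v_3 - \cdots = f_1'(0)-f_1'(-1)$. The only cosmetic difference is that the paper packages the double-counting step as a separate lemma phrased in terms of vertices of $G_2$ (which are the simplices of $G_1$), whereas you carry it out inline.
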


\paragraph{}
To prove this, we rewrite the Gauss-Bonnet result as a Gauss-Bonnet result for the 
second Barycentric refinement $G_2$. Define for a vertex $x$ in $G_2$ the curvature 
$$   k(x) = (-1)^{1+{\rm dim}(x)} (1+{\rm dim}(x)) \; . $$

\begin{lemma}
$\eta(G) = \sum_{x \in V(G_2)} k(x)$, where the sum is over all vertices $x$ in $G_2$ which have
positive dimension.
\end{lemma}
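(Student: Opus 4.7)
The plan is to pass from the Gauss-Bonnet formula $\eta(G)=\sum_{x\in V(G_1)}\chi(S(x))$ already proved in the theorem to a sum over $V(G_2)$ by unfolding the Euler characteristic of each unit sphere as an alternating simplex count and then swapping the order of summation. Throughout, I use that the vertices of $G_2=(G_1)_1$ are exactly the simplices of $G_1$, and that the dimension function on $V(G_2)$ agrees with the simplicial dimension in $G_1$.

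First I would write, for each vertex $x\in V(G_1)$,
\[
 \chi(S(x)) \;=\; \sum_{\sigma\in S(x),\,\sigma\ne\emptyset}(-1)^{\dim\sigma},
\]
using the combinatorial definition of Euler characteristic of the Whitney complex of the unit sphere. The key bijection is the one sending a non-empty simplex $\sigma$ of $S(x)$ to the simplex $\tau=\sigma\cup\{x\}$ of $G_1$; this is a bijection between non-empty faces of $S(x)$ and simplices $\tau$ of $G_1$ with $x\in\tau$ and $\dim\tau\ge 1$, and it shifts dimension by one. Substituting gives
\[
 \chi(S(x)) \;=\; \sum_{\tau\ni x,\,\dim\tau\ge 1}(-1)^{\dim\tau-1}.
\]

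Next I would sum over $x\in V(G_1)$ and swap the order of summation. Each simplex $\tau$ of $G_1$ with $\dim\tau\ge 1$ is counted once for every vertex $x\in\tau$, i.e.\ $1+\dim\tau$ times, so
\[
 \eta(G) \;=\; \sum_{x\in V(G_1)}\chi(S(x)) \;=\; \sum_{\tau\in G_1,\,\dim\tau\ge 1}(1+\dim\tau)(-1)^{1+\dim\tau}.
\]
Finally, reinterpreting each simplex $\tau$ of $G_1$ of positive dimension as a vertex of $G_2$ of the same positive dimension, the right-hand side becomes exactly $\sum_{x\in V(G_2),\,\dim x>0} k(x)$, which is the claim.

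There is no real obstacle: the argument is a standard incidence double count; the only thing to be careful about is the sign bookkeeping in the dimension shift $\dim\tau=\dim\sigma+1$ and the correct identification of $V(G_2)$ with the simplex set of $G_1$ so that the dimensions recorded by $k$ refer to the simplicial dimension in $G_1$ and not to some other indexing.
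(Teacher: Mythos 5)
Your proof is correct and follows essentially the same route as the paper's: the paper's ``handshake type argument'' is precisely your double count, pairing each nonempty simplex $\sigma$ of $S(x)$ with the simplex $\sigma\cup\{x\}$ of $G_1$ (a vertex of $G_2$) and observing that each positive-dimensional simplex $\tau$ is hit $1+\dim\tau$ times with sign $(-1)^{1+\dim\tau}$. You have merely written out the sign and bijection bookkeeping that the paper leaves implicit.
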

\begin{proof}
This is a handshake type argument. We start with $\eta(G) = \sum_x \chi(S(x))$. 
Since every $d$-dimensional simplex in $S(x)$ defines a $(d+1)$-dimensional simplex
containing $x$, super summing over all simplices of $S(x)$ gives a super sum over
simplices in $G_2$ where each simplex such $y$ appears ${\rm dim}(y)+1$ times. 
\end{proof}

{\rm Remark.} This gives us an upper bound on the functional $\eta$ in terms of the 
number of vertices in $G_2$ and the maximal dimension of $G$: $\eta(G) \leq |V(G_2)| (1+d)$.
If $G_1$ has $n$ elements, then $G_2$ has $\leq (d+1)! n$ elements. We see:

\begin{coro}
$\eta(G)$ is bounded above by $C_d |V(G_1)|$, where $C_d$ only depends on the maximal dimension of $G$.
\end{coro}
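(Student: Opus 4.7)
The plan is to combine the preceding lemma with a counting argument for the size of a second Barycentric refinement.

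First I would apply the lemma $\eta(G) = \sum_{x \in V(G_2)} k(x)$ with $k(x) = (-1)^{1+\dim(x)}(1+\dim(x))$. Since $\dim(x) \leq \dim(G_2) = \dim(G) = d$, we have the uniform bound $|k(x)| \leq d+1$, which gives immediately
\[
|\eta(G)| \;\leq\; (d+1)\,|V(G_2)|.
\]
This reduces the problem to controlling $|V(G_2)|$ in terms of $|V(G_1)|$.

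Next I would bound $|V(G_2)|$ by an integer multiple of $|V(G_1)|$ depending only on $d$. By definition, a vertex of $G_2$ is a face of $G_1$, i.e.\ a chain $f_0 \subsetneq f_1 \subsetneq \cdots \subsetneq f_k$ of faces of $G$ with $k \leq d$. Each such chain is a chain in the Boolean lattice on at most $d+1$ elements (the vertex set of its top face $f_k$). Fixing $f_k$ and counting the non-empty subchains ending at $f_k$ is a purely combinatorial count in this Boolean lattice, bounded by a constant $M_d$ depending only on $d$ (for instance, by the total number of non-empty chains in the Boolean lattice of rank $d+1$). Summing over the possible top faces $f_k \in V(G_1)$ gives
\[
|V(G_2)| \;=\; \#\{\text{chains in } G\} \;\leq\; M_d \cdot |V(G_1)|.
\]
Setting $C_d = (d+1)\,M_d$ completes the proof.

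The only mildly subtle step is the second one, where one has to argue that the number of chains ending at a given face is bounded by a function of $d$ alone and not of the local combinatorics of $G$; but since every subface of $f_k$ is already determined inside the finite set $f_k$, no information from the ambient complex $G$ enters, and the bound is purely a Boolean-lattice count. A slicker packaging of the same argument is to invoke the Stirling matrix $S_{ij} = i!\,S(j,i)$ that transports $f$-vectors under Barycentric refinement: each entry $S_{ij}$ is a function of $i,j \leq d$ only, so row sums give $\sum_k v_k(G_1) \leq C_d' \sum_j v_j(G) = C_d'\,|V(G_1)|$, and $|V(G_2)|$ admits the analogous bound in terms of $|V(G_1)|$. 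Either route yields the stated corollary.
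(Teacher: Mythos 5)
Your proof is correct and follows essentially the same route as the paper: the paper's remark preceding the corollary likewise bounds $|\eta(G)|$ by $(1+d)\,|V(G_2)|$ via the lemma and then bounds $|V(G_2)|$ by a dimension-dependent multiple of $|V(G_1)|$ (the paper quotes $(d+1)!\,n$, you give the equivalent Boolean-lattice chain count). Your added care in justifying why the chain count per top face depends only on $d$ is a welcome elaboration, not a deviation.
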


\paragraph{}
Now we can prove the result:

\begin{proof}
As $f_1(x) = v_0 x + v_1 x^2 + v_2 x^3 + \dots$, we have 
$f_1'(x) = v_0 + 2 v_1 x + 3 v_2 x^2 + \dots$ and
$f_1'(0)-h_1'(-1) = 2 v_1 - 3 v_2 + 4 v_3$ which is the same than 
$\sum_{x, {\rm dim}(x)>0} (-1)^{1+{\rm dim}(x)} (1+{\rm dim}(x))$. 
\end{proof}

As an application we can get a formula for $\eta_0(G_1+H_1)$, where $G_1+H_1$
is the Zykov sum of $G_1$ and $H_1$. The Zykov sum shares the properties
of the classical join operation in the continuum. The Grothendieck argument
produces from the monoid a group which can be augmented to become a ring
\cite{Spheregeometry}.

\begin{coro}
On the set of complexes with zero Euler characteristic, we have
$\eta_0(G_1+H_1) = \eta_0(G_1) + \eta_0(H_1)$.
\end{coro}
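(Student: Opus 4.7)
The plan is to deduce the corollary from two inputs: the identity $\eta_0(K)=f'_K(0)-f'_K(-1)$ established in Corollary 1, and the multiplicativity of the augmented $f$-polynomial under Zykov join. The hypothesis $\chi=0$ enters only at the very end, to eliminate the cross terms that arise when one evaluates the product at $x=-1$.

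First I would verify that the formula $\eta_0(K)=f'_K(0)-f'_K(-1)$, proved for the Barycentric refinement $K=G_1$, in fact holds for the Whitney complex of any graph. The handshake step of the Lemma --- each $d$-simplex $\tau\in K$ with $d\ge 1$ is obtained from a $(d-1)$-simplex of $S(x)$ in exactly $d+1$ ways, one for each vertex $x\in\tau$ --- uses only that simplices are cliques, so it applies to $G_1$, $H_1$, and the Whitney complex of the join graph $G_1+H_1$ alike.

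Next I would record the generating-function identity under join. Setting $F_K(x)=1+f_K(x)$, the augmented $f$-polynomial with constant term one (the empty face), the unique decomposition of a simplex of $G_1+H_1$ as $\sigma_G\cup\sigma_H$ with $\sigma_G$ a (possibly empty) simplex of $G_1$ and $\sigma_H$ a (possibly empty) simplex of $H_1$ gives $F_{G_1+H_1}=F_{G_1}\cdot F_{H_1}$, which is the paper's remark that generating functions multiply under join, phrased in augmented form. Differentiating and using $F'_K=f'_K$ yields
\[
f'_{G_1+H_1}(x)=f'_{G_1}(x)\,F_{H_1}(x)+F_{G_1}(x)\,f'_{H_1}(x).
\]

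Finally I would evaluate at the two relevant points and subtract. Since $F_{G_1}(0)=F_{H_1}(0)=1$, the $x=0$ evaluation is automatic; and since $\chi(G_1)=\chi(H_1)=0$ forces $F_{G_1}(-1)=1-\chi(G_1)=1$ and likewise $F_{H_1}(-1)=1$, the $x=-1$ evaluation also decouples, so that subtracting gives $\eta_0(G_1+H_1)=\eta_0(G_1)+\eta_0(H_1)$. There is no genuine obstacle; the only thing that needs a second look is the first step, since the join is not itself the Barycentric refinement of anything, but the handshake proof of the Lemma is entirely local and transfers verbatim. The zero Euler characteristic hypothesis is used exactly once, precisely to make $F(-1)=1$ so that the product rule decouples.
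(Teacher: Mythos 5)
Your proof is correct and follows essentially the same route as the paper: multiplicativity of the augmented generating function under the Zykov join, the product rule, and evaluation at $x=0$ and $x=-1$, with the hypothesis $\chi=0$ eliminating the cross terms $f_{G_1}'(-1)\chi(H_1)+f_{H_1}'(-1)\chi(G_1)$ that the paper's computation exhibits explicitly. Your preliminary check that $\eta_0(K)=f_K'(0)-f_K'(-1)$ holds for the Whitney complex of the join graph itself --- which is not a Barycentric refinement, so the earlier corollary does not literally apply --- is a point the paper glosses over, and it is worth making explicit exactly as you do, via the locality of the handshake argument.
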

\begin{proof}
We have $f_{G_1 + H_1} = f_{G_1} f_{H_1}$.
Now $\eta_0(G_1) = f_{G_1}'(0)-f_{G_1}'(-1)$ and
$\eta_0(H_1) = f_{H_1}(0)-f_{H_1}'(-1)$. By the product rule,
$f_{G_1+H_1}' = (f_{G_1} f_{H_1})' = f_{G_1}' f_{H_1} + f_{G_1} f_{H_1}'$.
we have now $f_{G_1+H_1}'(0) = f_{G_1}'(0) + f_{H_1}'(0)$ and
$f_{G_1+H_1}'(-1) =  f_{G_1}'(-1) (1-\chi(H_1)) + (1-\chi(G_1)) f_{H_1}'(-1)$ so that
$\eta_0(G_1+H_1) = \eta_(G_1) + \eta_(G_2) + f_{G_1}'(-1) \chi(H_1) + f_{H_1}'(-1) \chi(G_1)$. 
\end{proof} 

\section{Geometric graphs} 

\paragraph{}
We will see in this section that for graphs which discretized manifolds
or varieties which have all singularities isolated and split into such discrete manifolds,
the functional $\eta$ is non-negative. A typical example is a bouquet of spheres, 
glued together at a point.

\begin{figure}[!htpb]
\scalebox{0.1}{\includegraphics{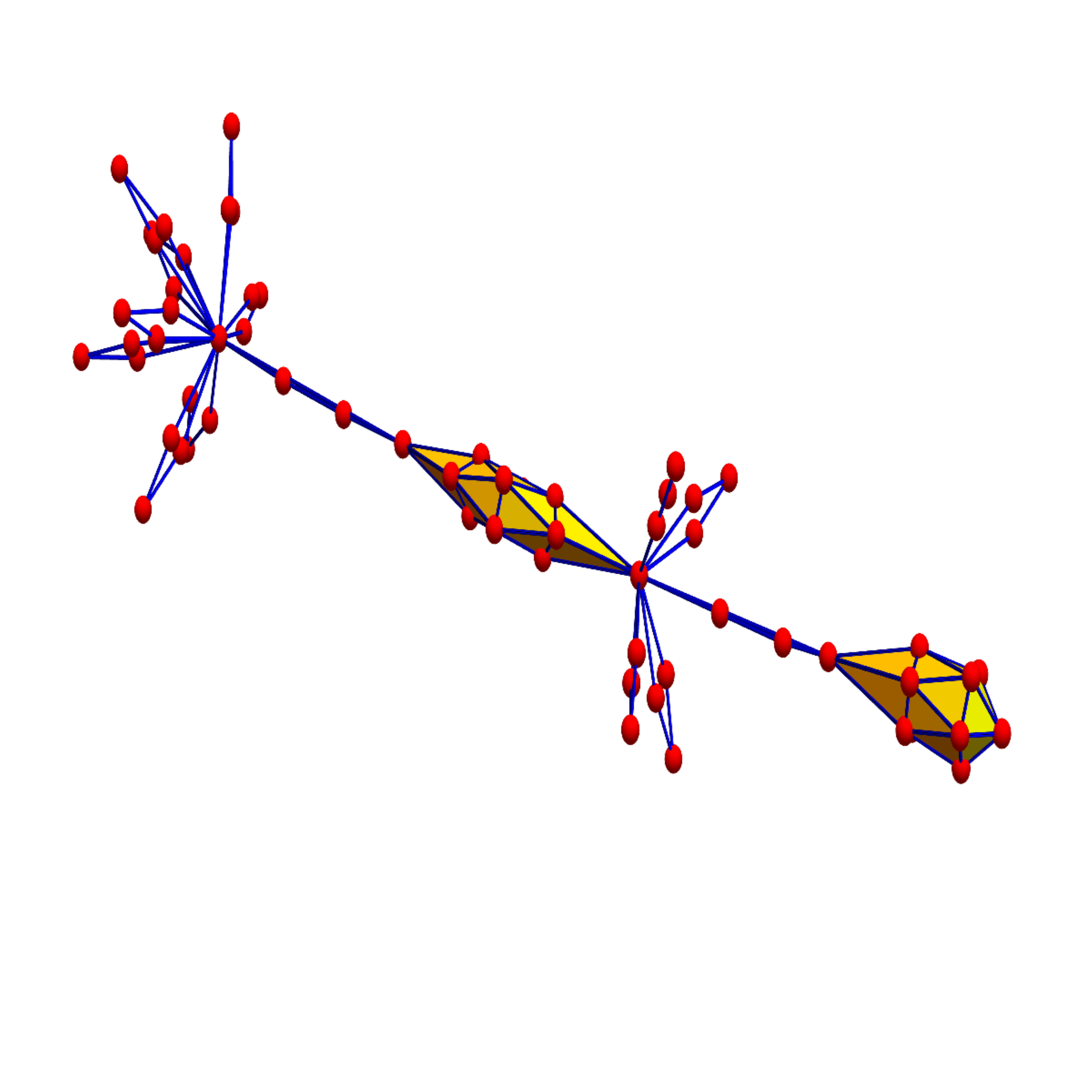}}
\caption{
An example of a discrete variety. A graph for which all unit spheres are
discrete spheres, with some exceptional but isolated points, the singularities. 
}
\end{figure}

\paragraph{}
A {\bf $d$-graph} is a finite simple graph for which every unit sphere $S(x)$ is a $(d-1)$-graph
which is a $(d-1)$-sphere. A {\bf $d$-sphere} is a $d$-graph which becomes collapsible if a
single vertex is removed. The inductive definitions of $d$-graph and $d$-sphere start with the
assumption that the empty graph is a $-1$-sphere and $-1$ graph and that the $1$ point graph
$K_1$ is collapsible. A graph $G$ is {\bf collapsible} if there exists a vertex $x$ such that both
$G \setminus x$ and $S(x)$ are collapsible. It follows by induction that $d$-sphere has Euler
characteristic $1+(-1)^{d} \in \{ 0,2\}$.

\paragraph{}
A simplicial complex is called a $d$-complex if its refinement $G_1$ is a $d$-graph. We now
see that for even-dimensional $d$-complexes, the functional $\eta$ is zero. A graph is a
{\bf $d$-graph with boundary} if every unit sphere is either a sphere or contractible and
such that the {\bf boundary}, the set of vertices for which the unit sphere $S(x)$ is contractible 
is a $d-1$-graph. An example is the wheel graph $G$ for which the boundary $\delta G$ 
is a circular graph.

\paragraph{}
Since for an even dimensional $d$-graph with boundary the Euler characteristic of the unit spheres
in the interior is zero, Gauss-Bonnet implies $\eta(G) = |V(\delta G)|$.
In the case of an odd-dimensional $d$-graph with boundary, it leads to
$\eta(G)=2 |{\rm int}(G)| + |\delta G|$. This leads to the observation: 

\begin{lemma}
If $G$ is a $d$-graph with boundary, then $\eta(G) \geq 0$. Equality holds
if and only if $G$ is an even dimensional graph without boundary. 
\end{lemma}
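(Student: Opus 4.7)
The plan is to apply the Gauss-Bonnet formula from the theorem,
$$\eta(G)=\sum_{x\in V(G_1)}\chi(S_{G_1}(x)),$$
and to verify every summand is nonnegative when $G$ is a $d$-graph with boundary.

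First, I would identify $S_{G_1}(x)$ for a face $x$ of $G$ of cardinality $k+1$. Its vertices split into the faces properly contained in $x$ (a copy of the boundary complex $\partial x$ of the simplex, topologically a $(k-1)$-sphere) and the faces properly containing $x$ (the link $\mathrm{lk}_G(x)$). For any $a\subsetneq x$ and $b\supsetneq x$ one has $a\subset b$, so $a$ and $b$ are adjacent in $G_1$. Therefore $S_{G_1}(x)$ is precisely the Zykov join $\partial x+\mathrm{lk}_G(x)$, and because reduced generating functions multiply under Zykov sum, as recalled earlier in the paper, one obtains
$$1-\chi(S_{G_1}(x))=\bigl(1-\chi(\partial x)\bigr)\bigl(1-\chi(\mathrm{lk}_G(x))\bigr).$$

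Second, I would evaluate both factors using the $d$-graph hypothesis. Since $\partial x$ is a $(k-1)$-sphere, the first factor equals $-(-1)^{k-1}$. If $x\not\subseteq\delta G$, then $\mathrm{lk}_G(x)$ is a $(d-k-1)$-sphere and the second factor equals $-(-1)^{d-k-1}$, so the product yields $\chi(S_{G_1}(x))=1+(-1)^{d-1}\in\{0,2\}$. If $x\subseteq\delta G$, then $\mathrm{lk}_G(x)$ is contractible, the second factor vanishes, and $\chi(S_{G_1}(x))=1$. In every case the summand is nonnegative, giving $\eta(G)\ge 0$.

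For the equality case, $\eta(G)=0$ requires every summand to vanish. Boundary summands contribute $1$ each, which forces $\delta G=\emptyset$. Interior summands contribute $1+(-1)^{d-1}$, which forces $d$ to be even. Conversely, if $G$ is an even-dimensional $d$-graph without boundary, every summand is $0$ and $\eta(G)=0$.

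The main obstacle is the combinatorial identification of $S_{G_1}(x)$ as the Zykov join $\partial x+\mathrm{lk}_G(x)$, together with the case split on whether $x$ lies in $\delta G$; once this is established, the multiplicative formula for $\chi$ under Zykov sum produces the values above and the conclusion follows mechanically.
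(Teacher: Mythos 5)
Your proof is correct and follows the same overall strategy as the paper: apply the Gauss--Bonnet identity $\eta(G)=\sum_{x\in V(G_1)}\chi(S_{G_1}(x))$ and check that every curvature $\chi(S_{G_1}(x))$ lies in $\{0,1,2\}$. The difference is in how that check is justified. The paper simply asserts that the unit spheres of $G_1$ are $(d-1)$-spheres at interior vertices and contractible at boundary vertices --- in effect invoking, without proof, that the Barycentric refinement of a $d$-graph with boundary is again a $d$-graph with boundary --- and then counts $\eta(G)=|V(\delta G_1)|$ in the even case and $\eta(G)=2|{\rm int}(G_1)|+|V(\delta G_1)|$ in the odd case. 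You instead derive the curvature values from the structure of $G$ itself, via the decomposition $S_{G_1}(x)=(\partial x)_1+({\rm lk}_G(x))_1$ as a Zykov join (your $\partial x$ and ${\rm lk}_G(x)$ should strictly be their Barycentric refinements, but $\chi$ is a combinatorial invariant so nothing changes) and the multiplicativity $1-\chi(A+B)=(1-\chi(A))(1-\chi(B))$ of the reduced generating function at $-1$, which the paper does make available. This buys a cleaner and more self-contained computation, and the parity bookkeeping $(-1)^k(-1)^{d-k}=(-1)^d$ makes it transparent why the answer is independent of $\dim(x)$. The one step you share with the paper at the same level of informality is the input that for $x$ of dimension $k\geq 1$ the link ${\rm lk}_G(x)$ is a $(d-k-1)$-sphere when $x\not\subseteq\delta G$ and contractible when $x\subseteq\delta G$: the paper's definition of a $d$-graph with boundary only constrains unit spheres of vertices, so this requires a short induction (pass to ${\rm lk}_{S(v)}(x\setminus v)$ for $v\in x$); it is standard but worth acknowledging. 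The equality analysis is correct as stated.
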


\paragraph{}
This can be generalized: if $G$ is a union of finitely many $d_k$-graphs $G_k$
such that the set of vertices which belong to at least $2$ graphs is isolated
in the sense that the intersection of any two $G_k$ does not contain any edge, 
then  $\delta(G) \geq 0$. Equality holds if $G$ is a finite union of even dimensional
graphs without boundary touching at a discrete set of points. The reason is that the unit 
spheres are again either spheres or then finite union of spheres of various dimension. 
Since the Euler characteristic of of a sphere is non-negative and the Euler characteristic
of a disjoint sum is the sum of the Euler characteristics, the non-negativity of $\eta$
follows. We will ask below whether more general singularities are allowed and still
have $\eta(G) \geq 0$.  \\

\paragraph{}
Maybe in some physical context, one would be interested especially in 
the case $d=4$ and note that
among all $4$-dimensional simplicial complexes with boundary
the complexes without boundary minimize the functional $\eta$. 
In the even dimensional case, the curvature of $\eta$ is supported
on the boundary of $G$. If we think of the curvature as a kind of 
charge, this is natural in a potential theoretic setup. Indeed, 
one should think of $V_x(y) = g(x,y)$ as a potential \cite{Helmholtz}.
In the case of an odd dimensional complex, there is a constant 
curvature present all over the interior and an additional constant
curvature at the boundary. Again, also in the odd dimensional case, the
absence of a boundary minimizes the functional $\eta$. \\

\paragraph{}
We should in this context also mention the {\bf Wu characteristic} for 
which we proved in \cite{valuation} that for $d$ graphs with boundary, the 
formula $\omega(G) = \chi(G)-\chi(\delta G))$ holds. 
The Wu characteristic $\omega$ was defined as 
$\omega(G) = \sum_{x \cap y \neq \emptyset} \omega(x) \omega(y)$ 
with $\sigma(x)=(-1)^{{\rm \omega}(x)}$. The Wu characteristic fits into
the topic of connection calculus as 
$\omega(G) = {\rm tr}(L J)$, where $J$ is the {\bf checkerboard matrix} 
$J_{xy} = (-1)^{{\rm dim}(i)+{\rm dim}(j)} = \omega^T \cdot \omega$
so that $J/n$ is a projection matrix \cite{Helmholtz}. Actually, in 
the eyes of Max Born, one could see $\omega(G)/n = (\omega,L \omega )$ 
as the expectation of the {\bf state} $\Omega=\omega/\sqrt{n}$.

\begin{comment}
Import["~/text3/graphgeometry/all.m"]; 
s=ErdoesRenyi[10,0.5]; omega=Wu[s]
L=ConnectionLaplacian[s]; n=Length[L];  g=Inverse[L];
dim=Map[Length,VertexList[BarycentricRaw[s]]];
J=Table[(-1)^(dim[[i]] +dim[[j]]),{i,n},{j,n}]; 
Tr[L.J]
Total[Map[EulerChi, UnitSpheres[s]]]
Total[Map[Wu, UnitSpheres[s]]]
(* {(J/Sqrt[n]).(J/Sqrt[n]) == J, Transpose[{v}].{v} == J} *)
Total[Map[Wu, UnitSpheres[s]]]  (* unaccounted for *)
Tr[g.J]                         (* unaccounted for *)
\end{comment}

\section{The sum of the sphere Euler characteristic}

\paragraph{}
We look now a bit closer at the functional
$$ \eta_0(G) = \sum_{x \in V(G)} \chi(S(x)) \;  $$
on graphs. It appears to be positive or zero for most Erd\"os-Renyi graphs
but it can take arbitrary large or small values. We have seen that
$\eta(G) = \eta_0(G_1) = {\rm tr}(L-L^{-1})$.
But now, we look at graphs $G$ which are not necessarily the Barycentric
refinement of a complex. \\

{\bf Examples.} \\
{\bf 1)} For a complete graph $K_{n}$ we have $\eta_0(G)=n$. \\
{\bf 2)} For a complete bipartite graph $K_{n,m}$ we have $\eta_0(G) = 2 n m$ and $\eta(G)=4 n m$.  \\
{\bf 3)} For an even dimensional $d$-graph $G$, we have $\eta_0(G)=0$. \\
{\bf 4)} For an odd dimensional $d$-graph $G$ we have $\eta_0(G) = 2|V(G)|$.  \\
{\bf 5)} For the product $G$ of linear graph $L_m$ of length $m$ with a figure $8$ graph $E =C_k \wedge_x C_k$
      we have for $k \geq 4$ and $m \geq 1$ the formula $\eta_0(L_m \times (C_k \wedge_x C_k)) = 
      28+(k-4) 8 - (m-1) 4 = 8k-4m$.  \\
{\bf 6)} So far, in all examples we have seen if $G_1$ is the Barycentric refinement, we see
      $|\eta_0(G_1)| \geq 2 |\eta_0(G)|$.  \\
{\bf 7)} For the graph $G$ obtained by filling in an equator plane, we have $\eta_0(G)=-4$ and $\eta(G)=-8$. 
% {1->2,1->3,1->4,1->5,2->7,2->3,2->4,2->6,3->7,3->5,3->6,4->7,4->5,4->6,5->7,5->6}
% Do[s=ErdoesRenyi[15,0.3]; Print[2 Eta0[s]<=Eta0[Barycentric[s]]], {100}]                        

\begin{figure}[!htpb]
\scalebox{0.1}{\includegraphics{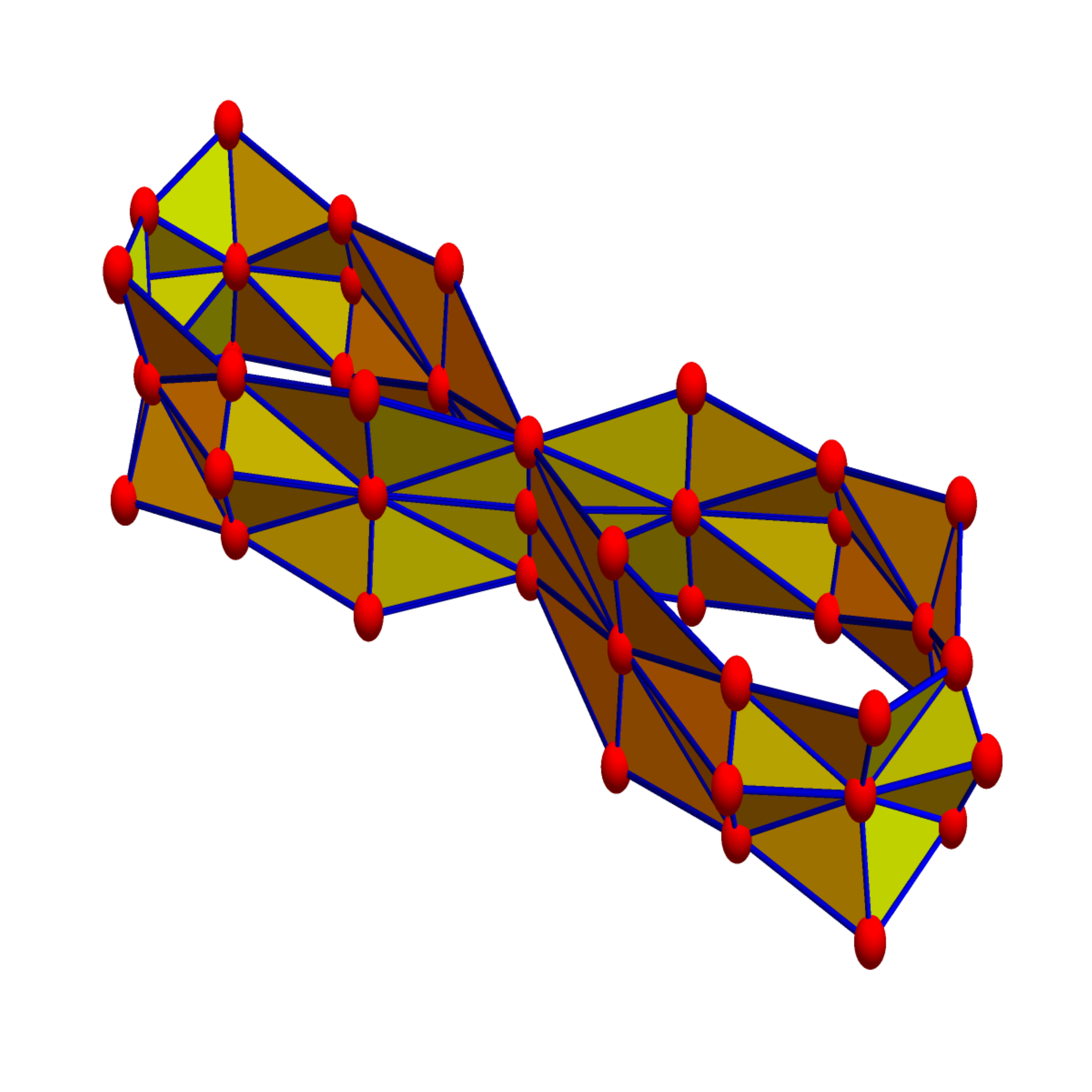}}
\caption{
The graph $G=L_2 \times (C_4 \wedge_x C_4)$ has $\eta_0(G)=28$. 
Increasing the length of $C_n$ by $1$ increases $\eta_0$ by $8$; increasing the
length of $L_m$ decreases $\eta_0$ by $4$. 
}
\end{figure}

\begin{lemma}
The functional is additive for a wedge sum and for the disjoint sum.
\end{lemma}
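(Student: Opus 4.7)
The plan is to unwind both constructions at the level of unit spheres and apply the additivity of Euler characteristic on disjoint unions, since $\eta_0(G)=\sum_{x\in V(G)} \chi(S(x))$ depends only on the combinatorial type of each unit sphere.

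First I would handle the disjoint sum $G \sqcup H$. Since there are no edges between $V(G)$ and $V(H)$, for any vertex $x\in V(G)$ the unit sphere $S_{G\sqcup H}(x)$ is literally the same graph as $S_G(x)$, and symmetrically for $x\in V(H)$. Splitting the sum over $V(G\sqcup H)=V(G)\sqcup V(H)$ then yields
\[
\eta_0(G\sqcup H)=\sum_{x\in V(G)}\chi(S_G(x))+\sum_{x\in V(H)}\chi(S_H(x))=\eta_0(G)+\eta_0(H).
\]

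Next I would treat the wedge sum $G\vee H$ at a distinguished vertex $p\in V(G)\cap V(H)$. For any vertex $x\ne p$ the same argument as above applies: $x$ is not adjacent to any vertex of the other summand, so $S_{G\vee H}(x)$ equals either $S_G(x)$ or $S_H(x)$. The only vertex that requires attention is $p$ itself. Here the neighbors of $p$ in $G\vee H$ split as the disjoint union of its neighbors in $G$ and its neighbors in $H$, with no extra edges between them because all edges of $G\vee H$ lie inside one of the two summands. Hence $S_{G\vee H}(p)=S_G(p)\sqcup S_H(p)$ as graphs, and by additivity of Euler characteristic on connected components, $\chi(S_{G\vee H}(p))=\chi(S_G(p))+\chi(S_H(p))$. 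Summing as before gives $\eta_0(G\vee H)=\eta_0(G)+\eta_0(H)$.

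There is really no genuine obstacle; the whole argument rests on the trivial observation that edges and hence unit spheres away from the glue point are unchanged, together with the fact that at the glue point the unit sphere decomposes as a disjoint union. The only point to state carefully is the convention for wedge sum, namely that $G$ and $H$ are glued at a single vertex with no additional edges identified, so that the two local pieces $S_G(p)$ and $S_H(p)$ remain disjoint inside $S_{G\vee H}(p)$.
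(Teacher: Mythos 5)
Your argument is correct and is essentially identical to the paper's proof: unit spheres of vertices lying in only one summand are unchanged, and at the glue vertex the unit sphere is the disjoint union $S_G(p)\sqcup S_H(p)$, so additivity of $\chi$ on disjoint unions gives the claim. You have simply written out in more detail the same two observations the paper makes.
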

\begin{proof}
In both cases, the unit spheres $S(x)$ for vertices $x$ in one
of the graphs $H,G$ only are not affected. For the vertex in the intersection,
then $S_{G \cup H}(x)$ is the disjoint union $S_G(x) \cup S_H(x)$. 
\end{proof} 

% s1=ErdoesRenyi[6,0.5]; s2=ErdoesRenyi[6,0.5];{Eta0[s1],Eta0[s2],Eta0[MyJoin[s1,s2]],Eta0[LargeProduct[s1,s2]]}

\begin{coro}
For every homotopy type of graphs, the functional $\eta_0$ is both unbounded from above and
below.
\end{coro}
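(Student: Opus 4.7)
The plan is to use the additivity of $\eta_0$ under wedge sums just established, combined with the fact that wedging with a contractible graph preserves homotopy type. Precisely, if $H$ is a collapsible graph and $v$ is the last vertex in some collapse sequence, then identifying $v$ with a vertex of $G$ produces a wedge $G \vee H$ that is homotopy-equivalent to $G$: every elementary move in the collapse of $H$ removes a non-$v$ vertex $u$ whose link in $H$ is contractible, and for such $u$ one has $S_{G \vee H}(u) = S_H(u)$, so the same move is valid inside $G \vee H$ and does not disturb $G$; running the whole collapse reduces $G \vee H$ to $G$. It therefore suffices to produce collapsible graphs $H_m^+$ and $H_n^-$ with $\eta_0(H_m^+) \to +\infty$ and $\eta_0(H_n^-) \to -\infty$.

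For unboundedness above, take $H_m^+ = K_{1,m}$, the star with $m$ leaves. It collapses to its center by removing the leaves in turn. The center has link equal to $m$ isolated vertices, contributing $\chi = m$; each leaf contributes $\chi = 1$; hence $\eta_0(K_{1,m}) = 2m$, which diverges as $m \to \infty$.

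For unboundedness below, take $H_n^-$ to be the cone on the complete bipartite graph $K_{n,n}$, i.e. the Zykov join $K_1 + K_{n,n}$ with apex $w$. It is collapsible: one removes the $n$ vertices of one part of the bipartition in turn, each removal being valid since the link is a star, and the graph left over is the star $K_{1,n}$ which itself collapses to $\{w\}$. The apex has link $K_{n,n}$, which is triangle-free with $2n$ vertices and $n^2$ edges, so $\chi(K_{n,n}) = 2n - n^2$; every other vertex has link a star with $\chi = 1$. Therefore $\eta_0(H_n^-) = (2n - n^2) + 2n = 4n - n^2 \to -\infty$.

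Given any graph $G$, wedging with $H_m^+$ or $H_n^-$ at the terminal vertex of the collapse sequence produces graphs in the same homotopy class as $G$, and by the additivity lemma, $\eta_0(G \vee H_m^+) = \eta_0(G) + 2m$ and $\eta_0(G \vee H_n^-) = \eta_0(G) + 4n - n^2$; letting $m, n \to \infty$ establishes both unboundedness claims. The only delicate point is the homotopy-invariance under wedging with a contractible factor; using collapsibility (strictly stronger than contractibility) and a collapse sequence ending at the wedge vertex makes this step routine.
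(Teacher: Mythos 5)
Your proof is correct and follows the same overall strategy as the paper: invoke the additivity of $\eta_0$ under wedge sums and then wedge $G$ with contractible graphs whose $\eta_0$ is arbitrarily large or arbitrarily negative. The only difference lies in the witnesses: the paper recycles its earlier example $L_m \times (C_k \wedge C_k)$, closes one end to make it contractible, and obtains the single two-parameter family with $\eta_0 = 8k-2m$ covering both directions, whereas you use two separate, fully explicit families, the star $K_{1,m}$ with $\eta_0 = 2m$ and the cone over $K_{n,n}$ with $\eta_0 = 4n-n^2$ (both computations check out). Your version is somewhat more self-contained, and your care in collapsing toward the wedge vertex makes the homotopy-preservation step explicit where the paper merely asserts it.
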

\begin{proof}
Take a graph $G$ with a given homotopy type. Take a second graph 
$H=\eta_0(L_m \times (C_k \wedge C_k))$ with large $m$. It has $\eta_0(H)=8k-4m$.
We can close one side of the graph to make it contractible. This produces a contractible
graph with $\eta_0(\tilde{H}) = 8k-2m$. The graph $\tilde{H} \wedge G$ now has the same
homotopy type than $G$ and has $\eta(\tilde{H} \wedge G) = 8k-2m + \eta_0(G)$. By choosing
$k$ and $m$ accordingly, we can make $\eta_0$ arbitrarily large or small. The addition 
of the contractible graph has not changed the homotopy type of $G$. 
\end{proof}

Let $C(k)$ denote the set of connected graphs with $k$ vertices.
On $C(2)$, we have $2 \leq \eta_0(G) \leq 2$,
on $C(3)$, we have $3 \leq \eta_0(G) \leq 3$,
on $C(4)$, we have $4 \leq \eta_0(G) \leq 8$,
on $C(5)$, we have $4 \leq \eta_0(G) \leq 12$,
and on $C(6)$ we have $0 \leq \eta_0(G) \leq 18$. 
% u=Map[Eta0,AllConnectedGraphs[7]];{Min[u],Max[u]} 

\section{About the spectrum of $L$}

\paragraph{}
We have not found any positive definite connection Laplacian $L$ yet.
Since $L$ has non-negative entries, we know that $L$ 
has non-negative eigenvalues. By unimodularity \cite{Unimodularity} it therefore has 
some positive eigenvalue. The question about negative eigenvalues is 
open but the existence of some negative eigenvalues would follow from ${\rm tr}(H) \geq 0$ 
thanks to the following formula dealing with the column vectors $A_i=L_i-e_i$ of the 
adjacency matrix of $G'$.

\begin{lemma} 
$\eta(G) = -\sum_i (A_i, g A_i)$. 
\end{lemma}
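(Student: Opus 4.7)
The plan is to reduce the sum $\sum_i (A_i, g A_i)$ to a single trace and then use $g = L^{-1}$ to collapse everything to traces of $L$ and $g$.

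First I would observe that, since $A$ is symmetric (as the adjacency matrix of the simple graph $G'$) and $A_i$ is its $i$th column, the quadratic form $(A_i, g A_i) = A_i^T g A_i$ is the $(i,i)$ entry of the matrix $A^T g A = A g A$. Summing over $i$ therefore gives
\[
\sum_i (A_i, g A_i) = \operatorname{tr}(A g A).
\]
This is the step that turns a coordinate-dependent sum into an invariant quantity and makes the rest of the argument purely algebraic.

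Second, I would expand using $A = L - I$ and $g L = L g = I$:
\[
A g A = (L-I)\, g\, (L-I) = L g L - L g - g L + g = L - 2I + g.
\]
Taking the trace gives $\operatorname{tr}(A g A) = \operatorname{tr}(L) + \operatorname{tr}(g) - 2n$, where $n$ is the number of simplices of $G$ (the size of $L$).

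Third, I would use the fact that $L = I + A$ and $A$ has zero diagonal (the graph $G'$ has no loops), so $\operatorname{tr}(L) = n$. Substituting,
\[
\operatorname{tr}(A g A) = n + \operatorname{tr}(g) - 2n = \operatorname{tr}(g) - \operatorname{tr}(L) = -\eta(G),
\]
which rearranges to the claimed identity $\eta(G) = -\sum_i (A_i, g A_i)$.

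There is no real obstacle here; the only subtlety is keeping straight that $A$ being symmetric lets us identify $\sum_i A_i^T g A_i$ with $\operatorname{tr}(A g A)$ rather than $\operatorname{tr}(A^T g A)$, and that $\operatorname{tr}(L) = n$ is what cancels one of the $-n$ terms to leave exactly $\operatorname{tr}(L) - \operatorname{tr}(g)$. Once the identity is in hand, the stated consequence for negativity of some eigenvalue of $L$ follows: if all eigenvalues of $L$ were positive, then $g = L^{-1}$ would be positive definite, forcing $(A_i, g A_i) \geq 0$ for every $i$ and hence $\eta(G) \leq 0$, so any proof that $\eta(G) > 0$ for some $G$ rules out positive definiteness of $L$.
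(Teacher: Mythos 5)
Your proof is correct, and it is organized differently from the paper's. The paper first establishes $\eta(G)={\rm tr}(L)-{\rm tr}(g)=\pm{\rm tr}(gA)$ by writing $gA=(1+A)^{-1}A=1-g$, and then converts the resulting sum $\sum_i e_i^T g A_i$ into $-\sum_i A_i^T g A_i$ by a separate cancellation: $L_i^T g=e_i^T$ (symmetry of $L$ and $Lg=1$) together with $A_{ii}=0$ gives $\sum_i(e_i+A_i)^T gA_i=\sum_i A_{ii}=0$. You instead identify $\sum_i (A_i,gA_i)$ with ${\rm tr}(AgA)$ at the outset (using symmetry of $A$) and expand $AgA=(L-1)g(L-1)=L-2+g$ in one shot. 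The ingredients are identical ($L=1+A$, $gL=Lg=1$, ${\rm tr}(L)=n$), but your version is more direct: it replaces the two-stage argument by a single trace computation, and it makes the sign of the final identity transparent. That is a genuine advantage here, because the paper's printed intermediate step asserts $gA=g-1$ where in fact $gA=1-g$; the error is compensated later, and the lemma as stated is correct, but your calculation verifies this cleanly. Your closing remark on positive definiteness matches the paper's corollary: if $g$ were positive definite then $\sum_i(A_i,gA_i)>0$ (note $A_i\neq 0$ whenever $G$ has more than one face), forcing $\eta(G)<0$.
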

\begin{proof}
Let $A$ be the adjacency matrix of the connection graph $G'$ so that the connection Laplacian $L$
satisfies $L=1+A$. As $L$ has entries $1$ in the diagonal only, we  
know ${\rm tr}(L) = n$ and $g \cdot A = (1+A)^{-1} A = -(1-(1+A)^{-1})= g-1$ so that
$\eta(G) = {\rm tr}(L)-{\rm tr}(g)={\rm tr}(1-g)=-{\rm tr}(g \cdot A) 
= - \sum_i e_i g A e_i = - \sum_i e_i g A_i) = - \sum_i A_i g A_i$.
The reason for the last step
is $\sum_i (e_i+A_i) g A_i = \sum_i L_i g A_i = \sum_i e_i A_i = 0$.
\end{proof}  

This immediately implies that $g$ (and so $L=g^{-1}$) can not 
be positive definite if $\eta(G) \geq 0$. Indeed, if $g$ were
positive definite, then $A_i g A_i>0$ for all $A_i$ and so $\sum_i A_i g A_i>0$
but $\eta(G) \geq 0$ implies $\sum_i A_i g A_i \leq 0$. 

\section{Open questions}

\paragraph{}
Since the entries of $L$ are non-negative and $L$ is invertible, the
Perron-Frobenius theorem shows that $L$ has some positive eigenvalue. 
A similar argument to show negative eigenvalues does not work yet, at 
least not in such a simple way. The next result could be easy but
we have no answer yet for the following question: 

\question{
Is it true that every connection Laplacian $L$ has some negative eigenvalue?
}

% f[s_]:=Min[ConnectionSpectrumN[Barycentric[s]] ]; Map[f,AllConnectedGraphs[4]]

\paragraph{}
Lets call a graph a {\bf $d$-variety}, if every unit sphere is a $(d-1)$-graph
except for some isolated set of points, the singularities, where the unit sphere
is allowed to be a $(d-1)$-variety. In every example of a $d$-variety 
with $\eta(G)<0$ seen so far, we have the singularities non-isolated 

\question{
Is it true that $\eta(G) \geq 0$ if $G$ is a $d$-variety?
}

The inequality holds for $d$ graphs with boundary as for such 
graphs every unit sphere either has non-negative Euler characteristic 
$0,2$ (interior) or $1$ (at the boundary). The example shown above with $\eta(G)<0$
has some unit spheres which are not $1$-graphs (disjoint unions of circular graphs)
but $1$-varieties. \\

It follows that also for patched versions, graphs which are the union of two graphs such that
at the intersection, the spheres add up. This happens for example, 
if two disks touch at a vertex.  \\

\begin{comment}
Its easier to find examples which are not 
Barycentric refinements $G_1$ of an other complex $G$. Here is an example
of a graph with $f$-vector $(15, 42, 30, 1)$ for which $(15,42,30,1) \cdot (0, 2, -3, 4) =-2$.
The $f$-vector of the refinement $G_1$ is $(88, 278, 216, 24)$ and 
$\eta(G) = (88, 278, 216, 24) \cdot (0,2,-3,4) =4 >0$. 
The graph $G$ is given by the rules
$\{1\to 4,1\to 6,1\to 7,1\to 8,1\to 14,2\to 12,3\to 4,3\to 6,3\to 12,3\to 13,3\to 15,4\to
   5,4\to 6,4\to 7,4\to 8,4\to 10,4\to 11,4\to 15,5\to 6,5\to 12,5\to 13,5\to 14,6\to
   10,6\to 11,6\to 14,7\to 10,7\to 13,7\to 14,8\to 9,8\to 10,9\to 14,9\to 15,10\to
   14,10\to 15,11\to 12,11\to 14,11\to 15,12\to 14,12\to 15,13\to 14,13\to 15,14\to 15\}$. 

\begin{figure}[!htpb]
\scalebox{0.3}{\includegraphics{}}
\scalebox{0.3}{\includegraphics{}}
\caption{
To the left we see a graph $G$ (equipped with the Whitney complex) for which 
the functional $v(G) \cdot (0,2,-3,4,\dots)$ is negative $-2$. 
But $G$ is not the Barycentric refinement of a complex. For the Barycentric
refinement $G_1$ of $G$ seen to the right, the functional is $4$ and was
called $\eta(G)$. 
}
\end{figure}
\end{comment}

\paragraph{}
If we think of $\chi(S(x))$ as a {\bf curvature} for the functional $\eta$, 
then a natural situation would be that zero total curvature implies that 
the curvature is zero everywhere. Here is a modification of the example
with negative $\eta$ for which $\eta(G)=0$ and so $\eta(G_n)=0$ for all $n$.

\begin{figure}[!htpb]
\scalebox{0.1}{\includegraphics{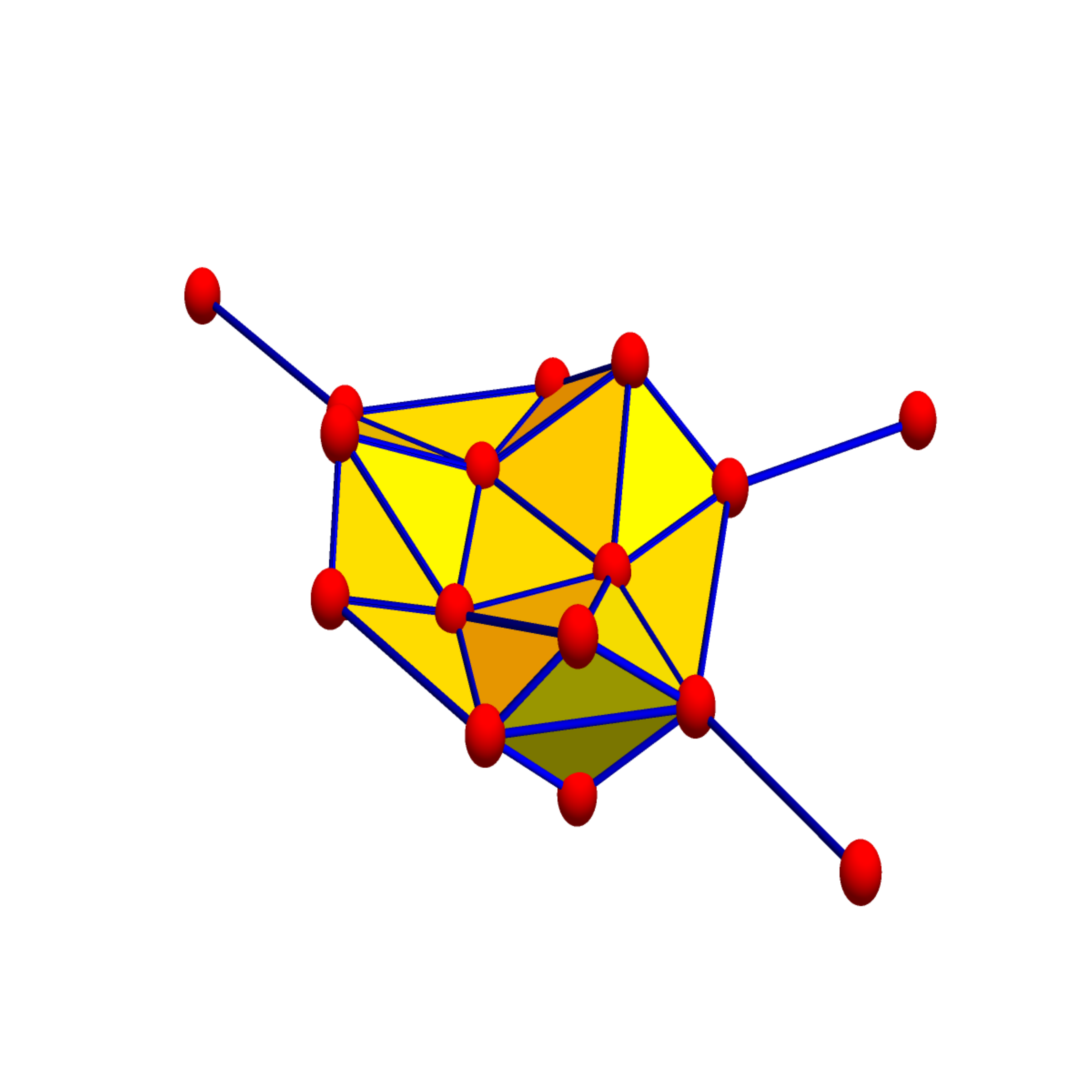}}
\scalebox{0.1}{\includegraphics{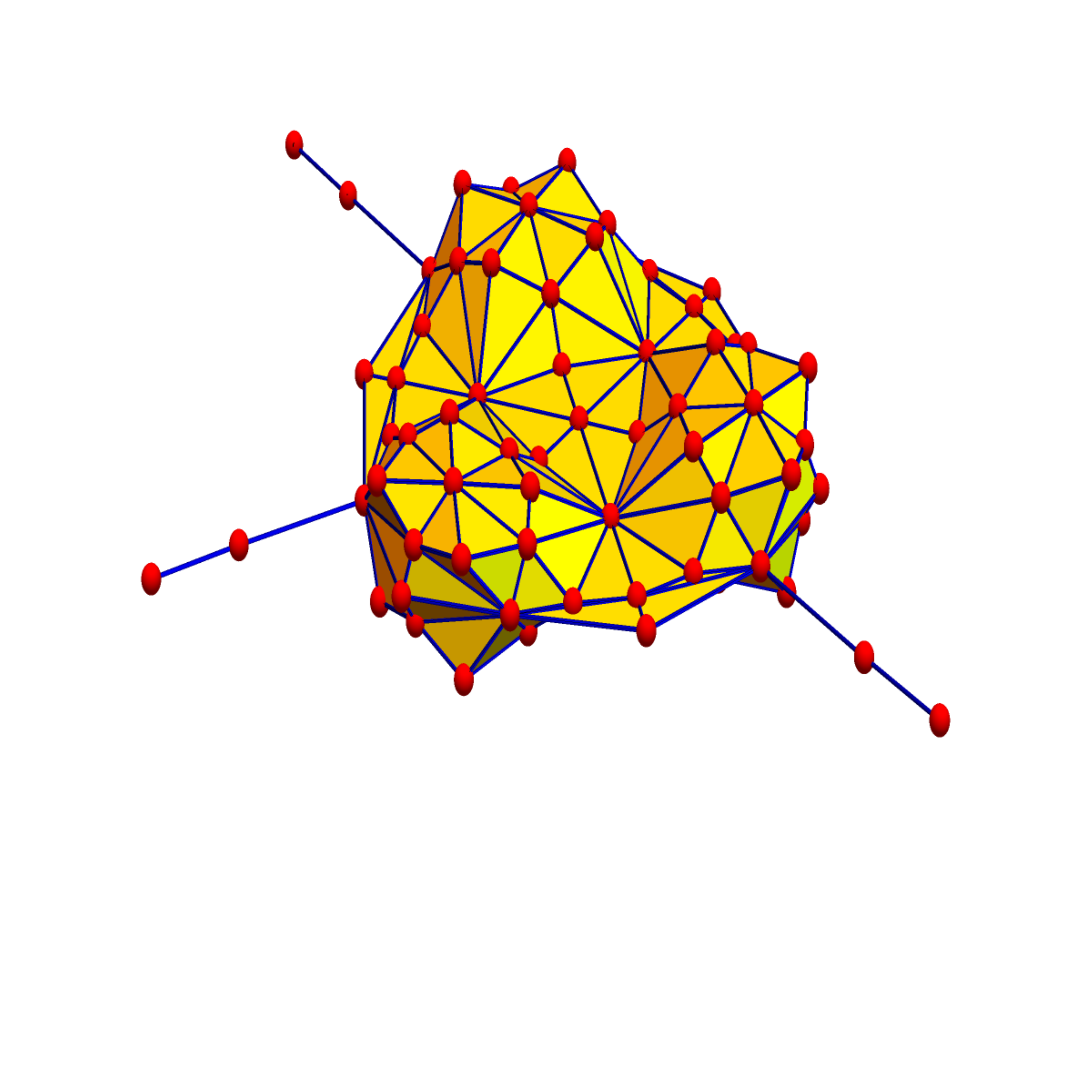}}
\caption{
A $2$-dimensional complex with $\eta(G)=0$. The $f$-vector is
$(v,e,f)=(18,42,28)$, the Betti numbers are still $b_0=1,b_1=0,b_2=3$, the
Euler characteristic still $v-e+f=b_0-b_1+b_2=4$. 
In this case, $2e-3f=(18,42,28) \cdot (0,2,-3)=0$. 
To the right, we see the refinement with $f$-vector 
$(88,252,168)$ and $\eta(G_1)=(88,252,168) \cdot (0,2,-3)$. Of course
$\eta(G_n) = S^n (18,42,28) (0,2,-3)=0$ for all $n$. 
}
\end{figure}

\paragraph{}
\question{
Is it true that for a $d$-variety, 
we have $\eta(G)=0$ if and only if $\chi(S(x))=0$ for all $x \in V(G_1)$. 
}

{\bf Examples:} \\
{\bf 1)} For $1$-dimensional graphs, we have $\eta(G)=4 v_1(G) > 0$. \\
{\bf 2)} For $2$-dimensional graphs, graphs with $K_3$ subgraphs and no $K_4$ 
subgraphs, the functional is a Dehn-Sommerville valuation $\eta(G) = 2 v_1 - 3 v_2$. It 
vanishes if every edge shares exactly two triangles. See \cite{valuation} for generalizations
to multi-linear valuations. \\
{\bf 3)} For $3$-dimensional graphs, graphs with $K_4$ subgraphs but no 
$K_5$ subgraphs, the functional is the Dehn-Sommerville valuation 
$2 v_1 - 3 v_2 + 4 v_3$.

\paragraph{}
A bit stronger but more risky is the question whether zero curvature implies 
that $G$ has the property that all unit spheres are unions of 
$d$-spheres:

\question{
Does $\eta(G)=0$ imply that $G$ is a finite union of disjoint $d_k$-graphs
wedged together so every unit sphere is a disjoint union of even dimensional $d$-graphs? }

\begin{comment}
\begin{figure}[!htpb]
\scalebox{0.1}{\includegraphics{}}
\caption{
A wedge product $G=A \wedge_x (B \wedge_y C)$ of three spheres
$A,B,C$ has $\eta(G)$. There are two singularities where the unit sphere
consists of a union of two circles. Still, the Gauss-Bonnet formula shows
that $\eta_1(G)=\eta_0(G_1)=0$. 
}
\end{figure}
\end{comment}

\paragraph{}
Besides discrete manifolds, there are discrete varieties for which $\eta(G)=0$.
Here is an example: 

\begin{figure}[!htpb]
\scalebox{0.1}{\includegraphics{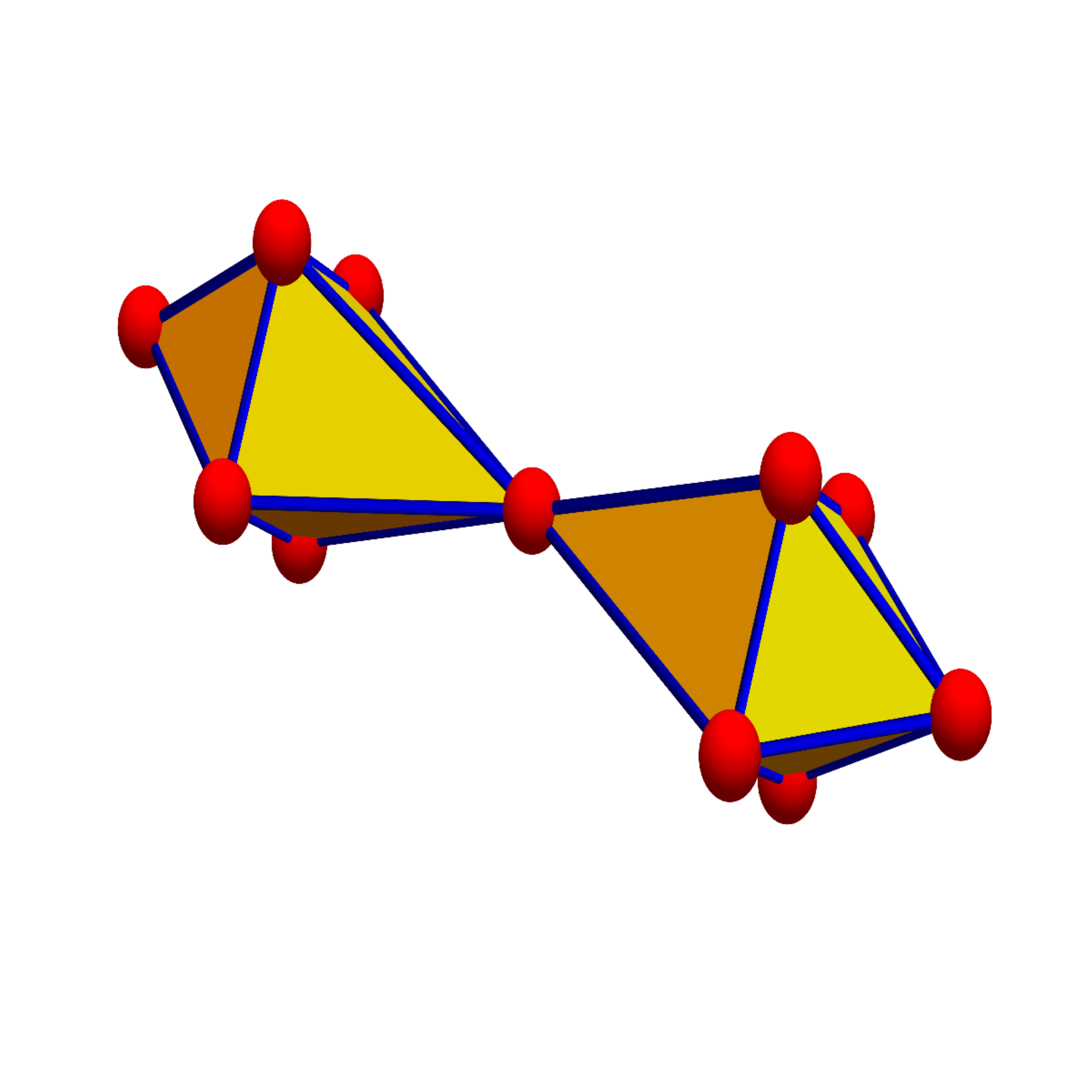}}
\scalebox{0.1}{\includegraphics{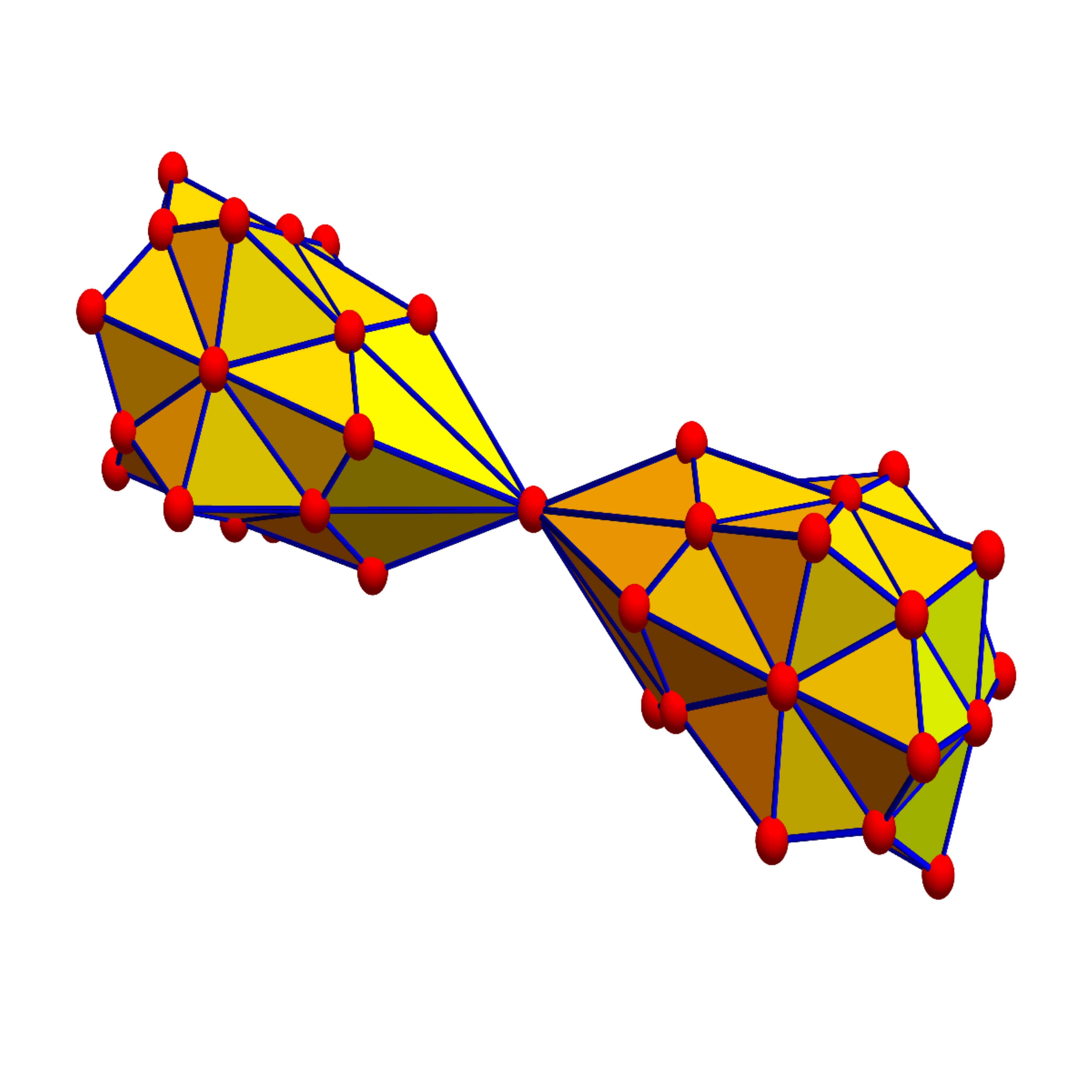}}
\caption{
A graph for which one of the unit spheres is a disconnected union of circular 
graphs. It is a $2$-variety but not a $2$-graph. 
It satisfies $\chi(G)=3$ and $\eta(G)=0$. To the right we see the Barycentric
refinement $G_1$ of $G$. The f-vector of $G_1$ is $\vec{v}=(51, 144, 96)$.
We have $\chi(G) = \vec{v} \cdot (1,-1,1)=3=b_0-b_1+b_2$ and 
$\eta(G) = \vec{v} \cdot (0,2,-3)=0$.  Again, for all Barycentric refinements,
we have $\eta(G)=0$. 
}
\end{figure}

% s = SimpleJoinGraph[OctahedronGraph, OctahedronGraph]; PlaquettePlot[s]

\paragraph{}
We could imagine for example that there are graphs which are almost
$d$-graphs in the sense that the unit spheres can become discrete homology spheres, graphs
with the same cohomology groups as spheres but whose geometric realiazations are not 
homeomorphic to spheres. An other possibility is we get graphs for which the Euler characteristic
is zero but for which are also topologically different from spheres. We could imagine generalized
$4$-graphs for example, where some unit spheres are $3$-graphs. All odd-dimensional 
graphs have then zero Euler characteristic by Dehn-Sommerville (an incarnation of Poincar\'e
duality). But we don't know yet of a construct of such graphs. 

\paragraph{}
One can look at further related variational problems on graphs. One can either fix the number
of elements in $G$ or the number of elements in $G_1$. In the later case, it of course does not
matter whether one minimizes ${\rm tr}(L-g)$ or maximizes the trace of the Green function ${\rm tr}(g)$
as ${\rm tr}(L)=f(0)=|V(G_1)|=n$. 

\question{
What are the minima of $\eta$ on complexes with a fixed number $n$ of faces?
Equivalently, what are the maxima of the trace of the Green function on complexes
with $n$ faces? }

This is a formidable problem if one wants to explore it numerically  as the number of 
simplicial complexes with a fixed number $n$ of faces grows very fast. 
A good challenge could be $n=26$ already as the $f$-vector of the octahedron $G$ is $(6,12,8)$. The
simplex generating function of $G_1$ is $f(x) = 1+26x+72x^2+48 x^3$ with Euler's Gem $\chi(G)=f(0)-f(-1)=2$.
Since $f'(x)=26+144x+144 x^2$ we have $f'(-1) = 26 = {\rm tr}(g)$. 
It is a good guess to ask whether the trace of a Green function $g$ of a simplicial complex $G$ 
with $26$ simplices can get larger than $26$. In Figure~(\ref{octahedronmatrix}) we
look at the $26 \times 26$ matrices $L$ and $g=L^{-1}$ of the Octahedron complex. 
Both matrices have $1$ in the diagonal. 
In the Green function case, we know $g_{xx} = 1-\chi(S(x)) = 1$ as all unit spheres $S(x)$ in $G_1$ 
are circles

\begin{figure}[!htpb]
\scalebox{0.11}{\includegraphics{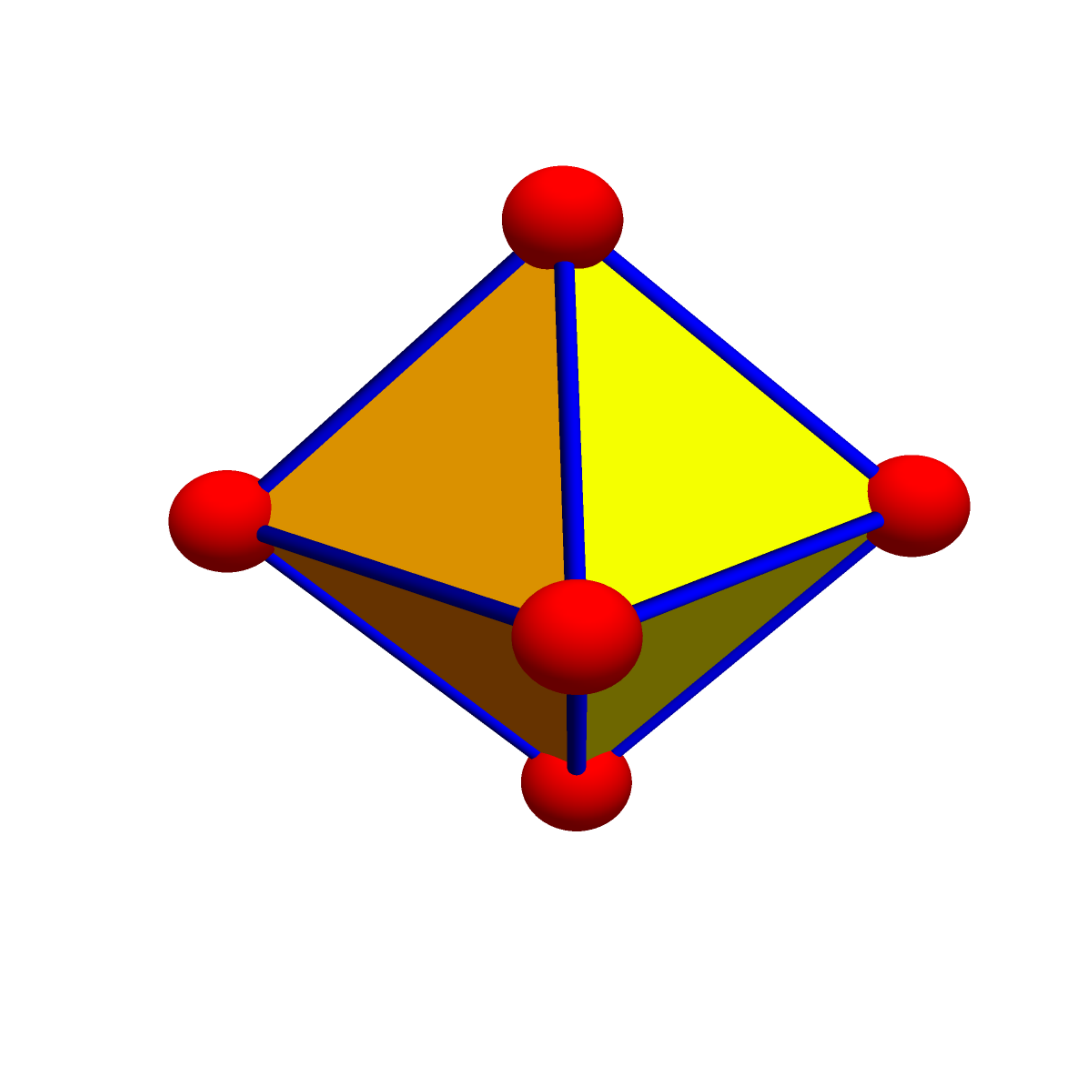}}
\scalebox{0.11}{\includegraphics{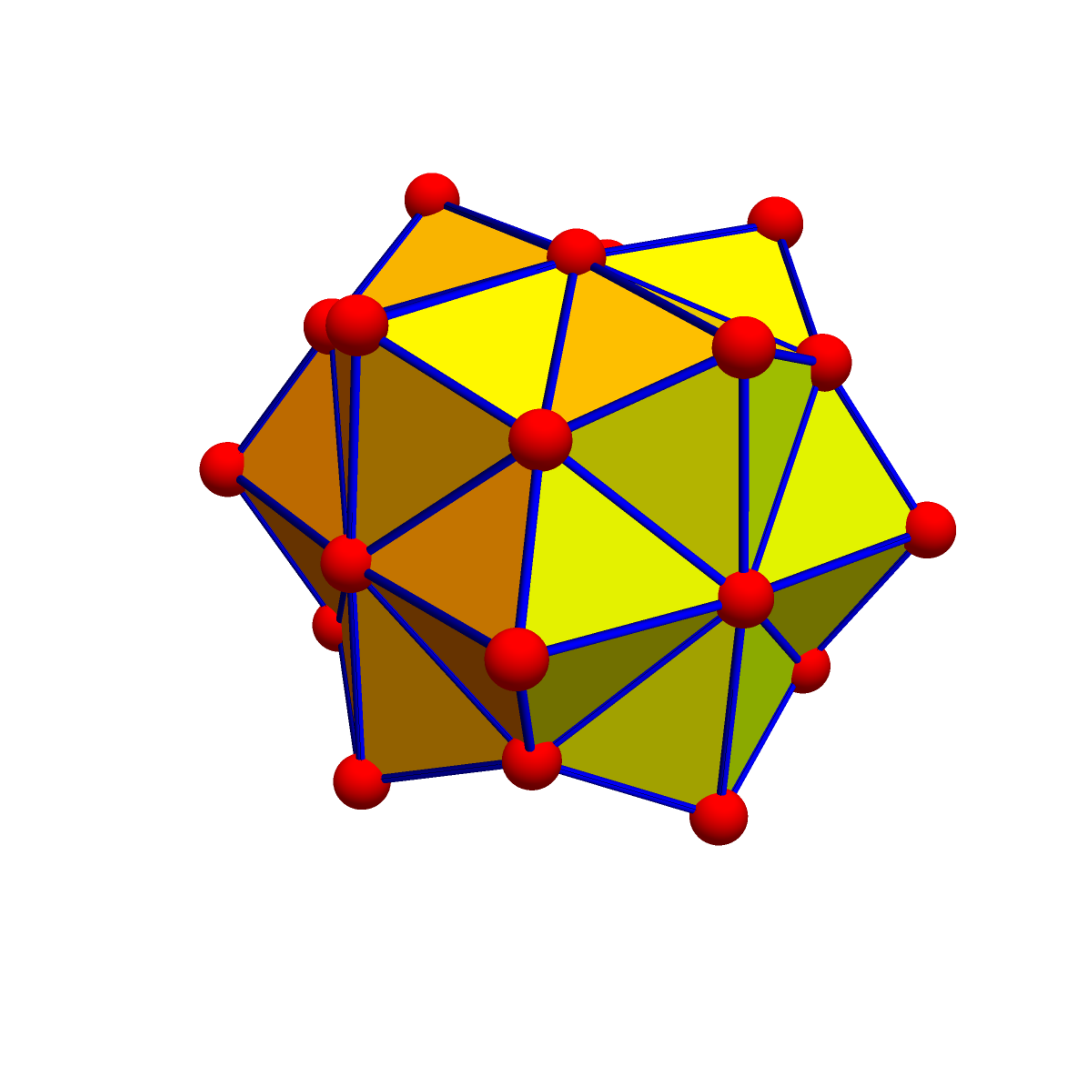}}
\caption{
The octahedron complex $G$ has $26 =6+12+8$ faces. The connection graph of $G$
is the Barycentric refinement of $G$. It is seen to the right. Is this the
complex which minimizes the trace of the Green function among all complexes with 
26 faces? 
}
\end{figure}

% F = SimplexGeneratingFunction[Barycentric[OctahedronGraph], x]

\begin{figure}[!htpb]
\scalebox{0.4}{\includegraphics{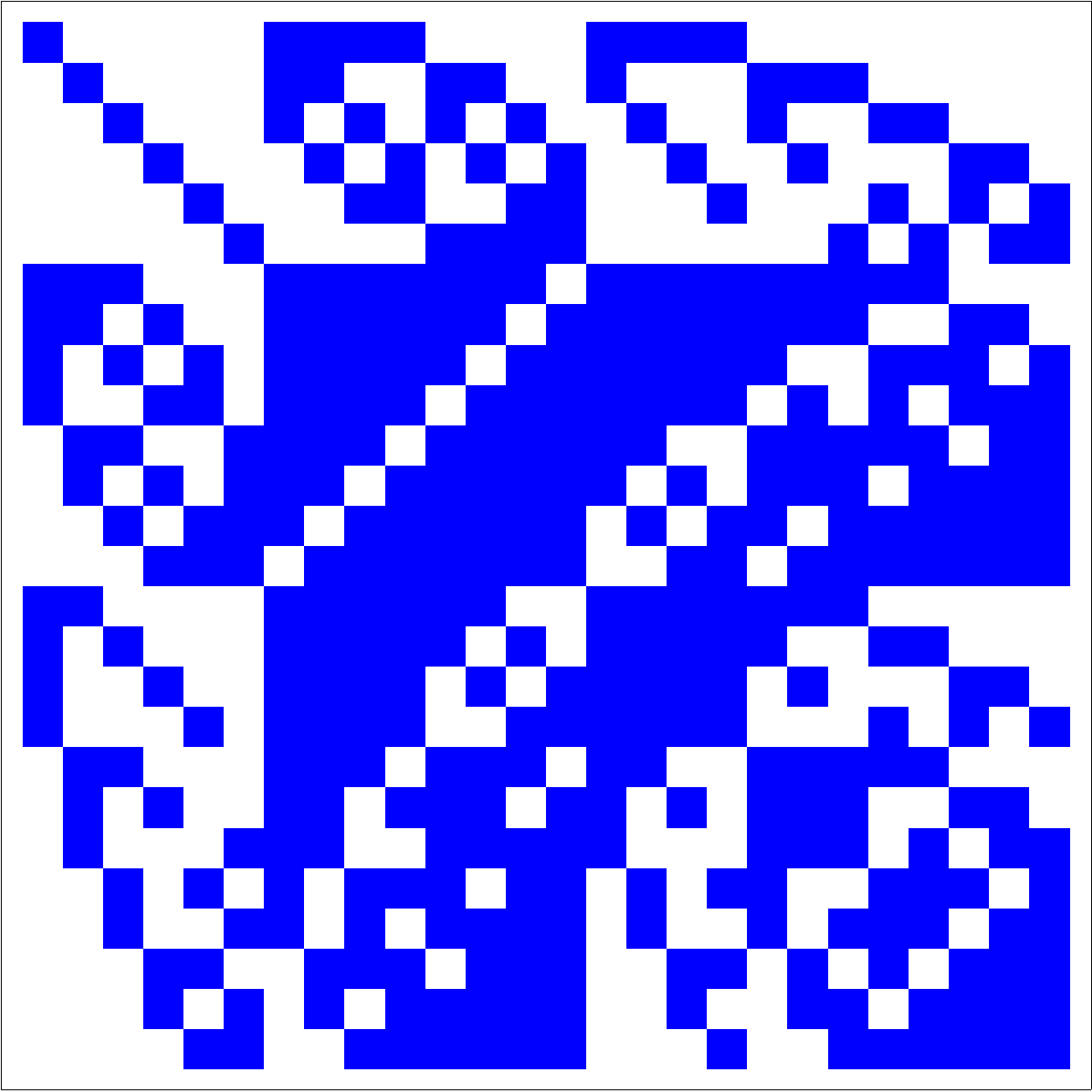}}
\scalebox{0.4}{\includegraphics{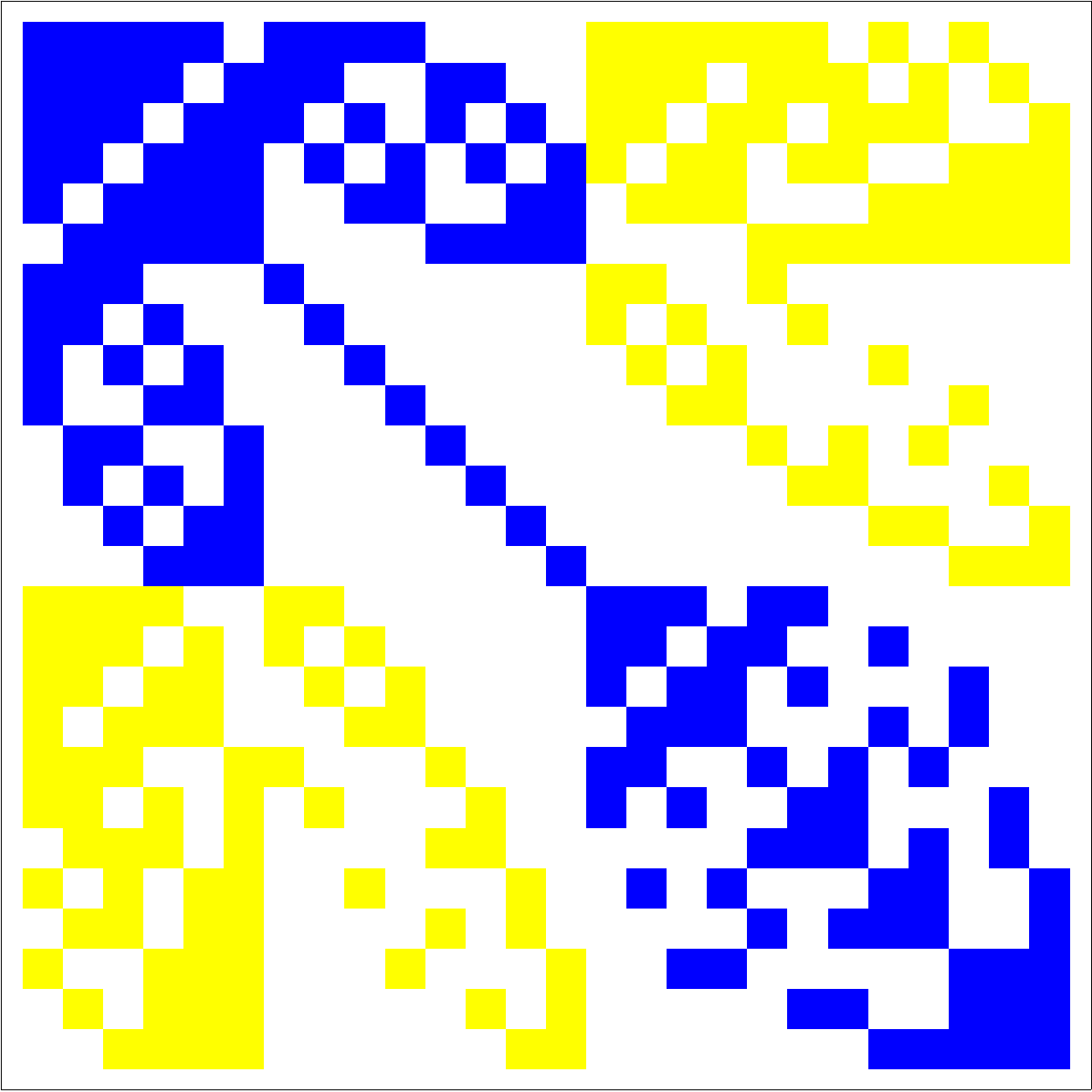}}
\caption{
\label{octahedronmatrix}
The $26 \times 26$ matrix $L$ seen to the left is the connection Laplacian of the
octahedron graph $G$. We have $L(x,y)=1$ if two complete subgraphs of $G$ intersect
and $L(x,y)=0$ else. The inverse $g$, the Green function is seen to the right. Also 
$g$ has $1$ in the diagonal as in the Barycentric refinement $G_1$ of $G$ every unit
sphere $S(x)$ is a circular graph. The complex $G$ is a discrete manifold. We don't
know of any other finite simplicial complex with $26$ elements which has a larger
trace of the Green function. 
}
\end{figure}

\paragraph{}
Finally, we have seen that the simplex generating function $f(x)$ and
its anti derivative $F$ can be used to compute Euler characteristic $\chi$, 
the Euler curvature and functional $\eta$ in a similar way
$$ \chi(G) = f(0)-f(-1), K = F(0)-F(-1),    \eta(G) = f'(0)-f'(-1) \; . $$
This of course prompts the question whether other similar function values of $f$ are 
geometrically interesting. The two functionals were now linked also 
by the Gauss-Bonnet relation $\eta(G) = \sum_x \chi(S(x))$ as well as the
old Gauss-Bonnet $\chi(G) = \sum_x K(x)$. 

\paragraph{}
There are other algebraic-analytic 
relations like ${\rm tr}(g) = f'(-1)$ 
and as a consequence of unimodularity, ${\rm det}(g) = (-1)^{1+(f_0(-1)+f_0(1))/2}$
but where $f_0$ is the $f$-vector generating function of $G$ itself, not of 
the Barycentric refinement $G_1$. The reason is that ${\rm det}(L)={\rm det}(g)$ 
is what we called the Fermi characteristic $\phi(G) = \prod_x (-1)^{{\rm dim}(x))}$ which is
$1$ if there are an even number of odd dimensional simplices present in $G$ and 
$-1$ if that number is odd. This number can change under Barycentric refinement
unlike the Euler characteristic $\chi(G) = \sum_x (-1)^{{\rm dim}(x)}$, where 
$\chi(G)=f_0(0)-f_0(-1) = f(0)-f(-1)=\chi(G_1)$ is the same for the simplex generating 
function of $G$ and $G_1$. 

\begin{comment}
Do[s=ErdoesRenyi[8,0.4];F=SimplexGeneratingFunction[Barycentric[s],x]; 
Print[Tr[GreenF[s]]==D[F,x] /. x->-1],{1000}]
\end{comment}
\begin{comment}
Do[s=ErdoesRenyi[12,0.5];F0=SimplexGeneratingFunction[s,x]; 
Print[FredholmConnectionDet[s]==(-1)^(1+((F0 /. x->-1)+(F0 /. x->1))/2)],{1000}]
\end{comment}

\pagebreak

\section{Code}

As usual, the following Mathematica procedures can be copy-pasted from the ArXiv'ed LaTeX source
file to this document. Together with the text, it should be pretty clear what each procedure does. 

\begin{tiny}
\lstset{language=Mathematica} \lstset{frameround=fttt}
\begin{lstlisting}[frame=single]
UnitSphere[s_,a_]:=Module[{b=NeighborhoodGraph[s,a]},
  If[Length[VertexList[b]]<2,Graph[{}],VertexDelete[b,a]]];
UnitSpheres[s_]:=Map[Function[x,UnitSphere[s,x]],VertexList[s]];
F[A_,z_]:=A-z IdentityMatrix[Length[A]]; F[A_]:=F[A,-1];
FredholmDet[s_]:=Det[F[AdjacencyMatrix[s]]];
BowenLanford[s_,z_]:=Det[F[AdjacencyMatrix[s],z]];
CliqueNumber[s_]:=Length[First[FindClique[s]]];
ListCliques[s_,k_]:=Module[{n,t,m,u,r,V,W,U,l={},L},L=Length;
  VL=VertexList;EL=EdgeList;V=VL[s];W=EL[s]; m=L[W]; n=L[V];
  r=Subsets[V,{k,k}];U=Table[{W[[j,1]],W[[j,2]]},{j,L[W]}];
  If[k==1,l=V,If[k==2,l=U,Do[t=Subgraph[s,r[[j]]];
  If[L[EL[t]]==k(k-1)/2,l=Append[l,VL[t]]],{j,L[r]}]]];l];
Whitney[s_]:=Module[{F,a,u,v,d,V,LC,L=Length},V=VertexList[s];
  d=If[L[V]==0,-1,CliqueNumber[s]];LC=ListCliques;
  If[d>=0,a[x_]:=Table[{x[[k]]},{k,L[x]}];
  F[t_,l_]:=If[l==1,a[LC[t,1]],If[l==0,{},LC[t,l]]];
  u=Delete[Union[Table[F[s,l],{l,0,d}]],1]; v={};
  Do[Do[v=Append[v,u[[m,l]]],{l,L[u[[m]]]}],{m,L[u]}],v={}];v];
Barycentric[s_]:=Module[{v={},c=Whitney[s]},Do[Do[If[c[[k]]!=c[[l]] 
  && (SubsetQ[c[[k]],c[[l]]] || SubsetQ[c[[l]],c[[k]]]),
  v=Append[v,k->l]],{l,k+1,Length[c]}],{k,Length[c]}];
  UndirectedGraph[Graph[v]]];
ConnectionGraph[s_] := Module[{c=Whitney[s],n,A},n=Length[c];
  A=Table[1,{n},{n}];Do[If[DisjointQ[c[[k]],c[[l]]]||
  c[[k]]==c[[l]],A[[k,l]]=0],{k,n},{l,n}];AdjacencyGraph[A]];
ConnectionLaplacian[s_]:=F[AdjacencyMatrix[ConnectionGraph[s]]];
FredholmCharacteristic[s_]:=Det[ConnectionLaplacian[s]];
GreenF[s_]:=Inverse[ConnectionLaplacian[s]];
Energy[s_]:=Total[Flatten[GreenF[s]]];
BarycentricOp[n_]:=Table[StirlingS2[j,i]i!,{i,n+1},{j,n+1}];
Fvector[s_] := Delete[BinCounts[Length /@ Whitney[s]], 1]; 
Fvector1[s_]:=Module[{f=Fvector[s]},BarycentricOp[Length[f]-1].f];
Fv=Fvector; Fv1=Fvector1;
GFunction[s_,x_]:=Module[{f=Fv[s]},1+Sum[f[[k]]x^k,{k,Length[f]}]];
GFunction1[s_,x_]:=Module[{f=Fv1[s]},1+Sum[f[[k]]x^k,{k,Length[f]}]];
dim[x_]:=Length[x]-1; Pro=Product; W=Whitney; 
Euler[s_]:=Module[{w=W[s],n},n=Length[w];Sum[(-1)^dim[w[[k]]],{k,n}]];
Fermi[s_]:=Module[{w=W[s],n},n=Length[w];Pro[(-1)^dim[w[[k]]],{k,n}]];
Eta[s_]:=Tr[ConnectionLaplacian[s]-GreenF[s]]; 
Eta0[s_]:=Total[Map[Euler,UnitSpheres[s]]];
Eta1[s_]:=Total[Map[Euler,UnitSpheres[Barycentric[s]]]];
EtaG[s_]:=Module[{g=GFunction1[s,x]}, f[y_]:=g /. x->y;f'[0]-f'[-1]];
EulerG[s_]:=Module[{g=GFunction[s,x]},f[y_]:=g /. x->y;f[0] -f[-1] ];

s=RandomGraph[{23,60}]; sc = ConnectionGraph[s]; 
{Euler[s],Energy[s],EulerG[s]}
{Fermi[s],BowenLanford[sc,-1],FredholmCharacteristic[s]}
{Eta1[s],Eta[s],EtaG[s]} 
\end{lstlisting}
\end{tiny}

\pagebreak

\section{Examples}

\paragraph{}
Let $G$ be the house graph. It has the $f$-vector $(5,6,1)$, the generating function $f(x)=1+5x+6x^2+x^2$ and
the Euler characteristic $5-6+1=0$. The unit spheres have all Euler characteristic $\chi(S(x))=2$ except the 
roof-top where $\chi(S(x))=1$.
We therefore have $\eta_0(G)=9$. The Barycentric refinement $G_1$ of $G$ has the $f$-vector $(12,18,6)$. 
The unit sphere Euler characteristic spectrum is $(0, 2, 2, 1, 1, 1, 1, 2, 2, 2, 2, 2)$ totals to
$\eta_1(G)=\eta_0(G_1)=18$. The connection graph $G'$ of $G$ already has dimension $4$. Its 
$f$-vector is $(12, 29, 27, 12, 2)$ and $\chi(G')=0$ still. 

\begin{figure}[!htpb]
\scalebox{1.1}{\includegraphics{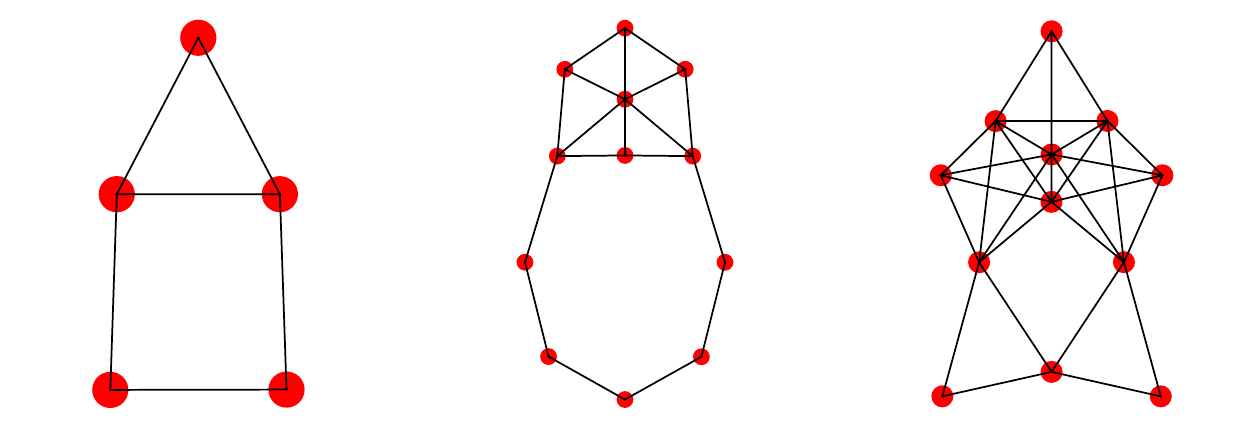}}
\caption{
The house graph $G$, its barycentric refinement $G_1$ and
its connection graph $G'$. 
}
\end{figure}

The connection Laplacian $L$ and its inverse $g$ are
\begin{tiny} $$
\arraycolsep=2pt\def\arraystretch{1.0}
L=\left[ \begin{array}{cccccccccccc}
1&0&1&1&0&1&0&1&1&1&1&1\\
0&1&0&0&0&0&1&1&0&0&0&0\\
1&0&1&0&0&0&0&1&1&1&0&0\\
1&0&0&1&0&0&0&0&1&0&1&1\\
0&0&0&0&1&0&1&0&0&0&1&0\\
1&0&0&0&0&1&0&0&0&1&0&1\\
0&1&0&0&1&0&1&1&0&0&1&0\\
1&1&1&0&0&0&1&1&1&1&0&0\\
1&0&1&1&0&0&0&1&1&1&1&1\\
1&0&1&0&0&1&0&1&1&1&0&1\\
1&0&0&1&1&0&1&0&1&0&1&1\\
1&0&0&1&0&1&0&0&1&1&1&1\\
\end{array} \right],
g=\left[ \begin{array}{cccccccccccc}
1&0&1&1&0&1&0&0&-1&-1&0&-1\\
0&-1&-1&0&-1&0&1&1&0&0&0&0\\
1&-1&-1&0&0&0&0&1&0&0&0&-1\\
1&0&0&-1&-1&0&0&0&0&-1&1&0\\
0&-1&0&-1&-1&0&1&0&0&0&1&0\\
1&0&0&0&0&0&0&0&-1&0&0&0\\
0&1&0&0&1&0&-1&0&0&0&0&0\\
0&1&1&0&0&0&0&-1&0&0&0&0\\
-1&0&0&0&0&-1&0&0&0&1&0&1\\
-1&0&0&-1&0&0&0&0&1&0&0&1\\
0&0&0&1&1&0&0&0&0&0&-1&0\\
-1&0&-1&0&0&0&0&0&1&1&0&0\\
\end{array} \right] \; . $$
\end{tiny}
The trace of $L$ is $12$, the trace of $g$ is $-6$. 
The super trace of both $L$ and $g$ or the sum $\sum_{x,y} g(x,y)$ are
all $\chi(G) =0$. The spectrum of $L$ is
$\sigma(L)= \{$ $-1.30009$, $-0.827091$, $-0.646217$, $-0.528497$, $-0.338261$, $-0.255285$, 
$0.245226$, $1.20906$, $1.72111$, $2.9563$, $3.17017$, $6.59358$ $\}$. We see in
most random graphs that about half of the eigenvalues are negative and that the negative 
spectrum has smaller amplitude. 
% Do[s=ErdoesRenyi[60,0.5]; L=ConnectionLaplacian[s]; Print[{Total[Flatten[L]],Total[Fvector[ConnectionGraph[s]]]}],{10}]     

\begin{comment}
\begin{tiny}
\lstset{language=Mathematica} \lstset{frameround=fttt}
\begin{lstlisting}[frame=single]
HouseGraph=UndirectedGraph[Graph[{1->2,2->3,3->4,4->1,2->5,3->5}]];
HouseGraph1=Barycentric[HouseGraph];
HouseGraphP=ConnectionGraph[HouseGraph];
DoublePyramid=UndirectedGraph[Graph[{1->2,1->3,1->4,1->5,2->7,2->3,2->4,2->6,3->7,3->5,3->6,4->7,4->5,4->6,5->7,5->6}]];
DoublePyramid1=Barycentric[DoublePyramid];
DoublePyramidP=ConnectionGraph[DoublePyramid];
G=DoublePyramid; G1=DoublePyramid1; GP=DoublePyramidP;
L=ConnectionLaplacian[G]
g=Inverse[L]
Map[Euler,UnitSpheres[G]] 
Eta0[G]
Map[Euler,UnitSpheres[G1]]
Eta1[G]==Eta[G]
Total[Flatten[g]] == Euler[G]
f=Fvector[GP]
Total[Flatten[L]] == f[[1]]+2 f[[2]]
\end{lstlisting}
\end{tiny}
\end{comment}

\paragraph{}
The {\bf double pyramid} $G$ is a $2$-variety with $7$ vertices. It can be obtained
by making two separate pyramid construction over a wheel graph. One can also write it
as the Zykov join $P_3+C_4$ of $P_3$ with $C_4$ or then $P_3+P_2+P_2=P_3 + 2 P_2$. 
While the octahedron $O=P_2+C_4=P_2+P_2+P_2=3P_2$ has has 
$\eta(O)=0$ and all unit spheres with Euler characteristic $0$,
now there are $4$ vertices in $G$ where $S(x)$ has Euler characteristic $-1$. 
The graph $G$ has $f$-vector $(7,16,12)$ and Euler characteristic $3$ and Betti
vector $(1,0,2)$. The $f$-vector of the Barycentric refinement is $(35,104,72)$. 
We have $\eta_0(G)=-4$. The Barycentric refinement has $8$ vertices with Euler
characteristic $-1$ and $\eta_1(G)=-8$. 

\begin{figure}[!htpb]
\scalebox{1.1}{\includegraphics{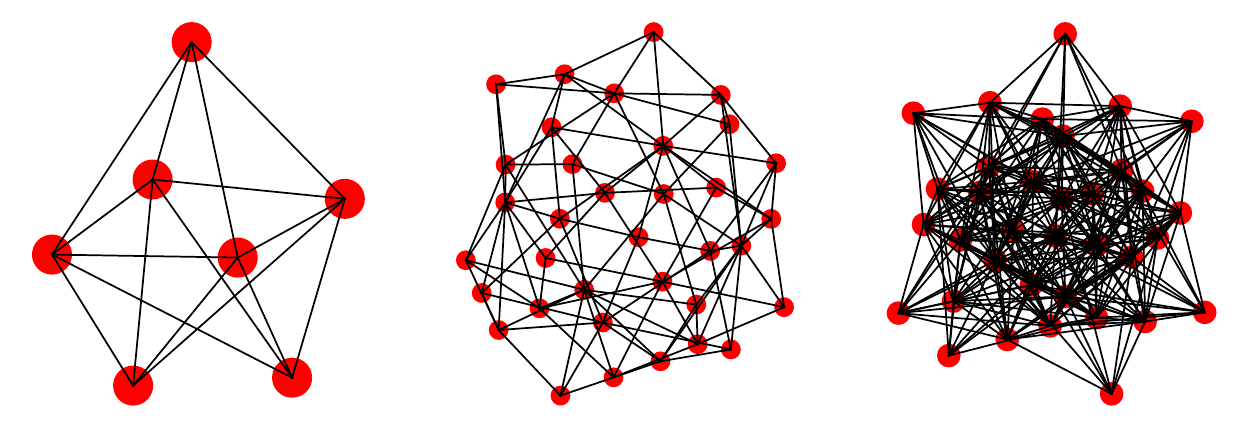}}
\caption{
The double pyramid $G = P_3 + C_4=P_3+2 P_2$ is a triple suspension of
a circle. Compare this with other Zykov sums: 
$P_1+C_4=W_4$, the wheel graph or $P_2 +C_4=O$, the octahedron. 
We see also its Barycentric refinement $G_1$ and its connection graph $G'$.
}
\end{figure}

The connection Laplacian 
\begin{tiny} $$
\arraycolsep=2pt\def\arraystretch{1.0}
L=\left[ \begin{array}{ccccccccccccccccccccccccccccccccccc}
1&0&0&0&0&0&0&1&1&1&1&0&0&0&0&0&0&0&0&1&1&1&1&0&0&0&0&0&0&0&0&0&0&0&0\\ 
0&1&0&0&0&0&0&1&1&0&0&1&1&1&1&0&0&0&0&1&0&0&0&1&1&1&1&0&0&0&0&0&0&0&0\\ 
0&0&1&0&0&0&0&1&0&1&0&1&1&0&0&1&1&0&0&0&1&0&0&1&0&0&0&1&1&1&0&0&0&0&0\\ 
0&0&0&1&0&0&0&0&1&0&1&0&0&1&1&0&0&1&1&0&0&1&0&0&1&0&0&0&0&0&1&1&1&0&0\\ 
0&0&0&0&1&0&0&0&0&1&1&0&0&0&0&1&1&1&1&0&0&0&1&0&0&0&0&1&0&0&1&0&0&1&1\\ 
0&0&0&0&0&1&0&0&0&0&0&1&0&1&0&1&0&1&0&0&0&0&0&0&0&1&0&0&1&0&0&1&0&1&0\\ 
0&0&0&0&0&0&1&0&0&0&0&0&1&0&1&0&1&0&1&0&0&0&0&0&0&0&1&0&0&1&0&0&1&0&1\\ 
1&1&1&0&0&0&0&1&1&1&1&1&1&1&1&1&1&0&0&1&1&1&1&1&1&1&1&1&1&1&0&0&0&0&0\\ 
1&1&0&1&0&0&0&1&1&1&1&1&1&1&1&0&0&1&1&1&1&1&1&1&1&1&1&0&0&0&1&1&1&0&0\\ 
1&0&1&0&1&0&0&1&1&1&1&1&1&0&0&1&1&1&1&1&1&1&1&1&0&0&0&1&1&1&1&0&0&1&1\\ 
1&0&0&1&1&0&0&1&1&1&1&0&0&1&1&1&1&1&1&1&1&1&1&0&1&0&0&1&0&0&1&1&1&1&1\\ 
0&1&1&0&0&1&0&1&1&1&0&1&1&1&1&1&1&1&0&1&1&0&0&1&1&1&1&1&1&1&0&1&0&1&0\\ 
0&1&1&0&0&0&1&1&1&1&0&1&1&1&1&1&1&0&1&1&1&0&0&1&1&1&1&1&1&1&0&0&1&0&1\\ 
0&1&0&1&0&1&0&1&1&0&1&1&1&1&1&1&0&1&1&1&0&1&0&1&1&1&1&0&1&0&1&1&1&1&0\\ 
0&1&0&1&0&0&1&1&1&0&1&1&1&1&1&0&1&1&1&1&0&1&0&1&1&1&1&0&0&1&1&1&1&0&1\\ 
0&0&1&0&1&1&0&1&0&1&1&1&1&1&0&1&1&1&1&0&1&0&1&1&0&1&0&1&1&1&1&1&0&1&1\\ 
0&0&1&0&1&0&1&1&0&1&1&1&1&0&1&1&1&1&1&0&1&0&1&1&0&0&1&1&1&1&1&0&1&1&1\\ 
0&0&0&1&1&1&0&0&1&1&1&1&0&1&1&1&1&1&1&0&0&1&1&0&1&1&0&1&1&0&1&1&1&1&1\\ 
0&0&0&1&1&0&1&0&1&1&1&0&1&1&1&1&1&1&1&0&0&1&1&0&1&0&1&1&0&1&1&1&1&1&1\\ 
1&1&0&0&0&0&0&1&1&1&1&1&1&1&1&0&0&0&0&1&1&1&1&1&1&1&1&0&0&0&0&0&0&0&0\\ 
1&0&1&0&0&0&0&1&1&1&1&1&1&0&0&1&1&0&0&1&1&1&1&1&0&0&0&1&1&1&0&0&0&0&0\\ 
1&0&0&1&0&0&0&1&1&1&1&0&0&1&1&0&0&1&1&1&1&1&1&0&1&0&0&0&0&0&1&1&1&0&0\\ 
1&0&0&0&1&0&0&1&1&1&1&0&0&0&0&1&1&1&1&1&1&1&1&0&0&0&0&1&0&0&1&0&0&1&1\\ 
0&1&1&0&0&0&0&1&1&1&0&1&1&1&1&1&1&0&0&1&1&0&0&1&1&1&1&1&1&1&0&0&0&0&0\\ 
0&1&0&1&0&0&0&1&1&0&1&1&1&1&1&0&0&1&1&1&0&1&0&1&1&1&1&0&0&0&1&1&1&0&0\\ 
0&1&0&0&0&1&0&1&1&0&0&1&1&1&1&1&0&1&0&1&0&0&0&1&1&1&1&0&1&0&0&1&0&1&0\\ 
0&1&0&0&0&0&1&1&1&0&0&1&1&1&1&0&1&0&1&1&0&0&0&1&1&1&1&0&0&1&0&0&1&0&1\\ 
0&0&1&0&1&0&0&1&0&1&1&1&1&0&0&1&1&1&1&0&1&0&1&1&0&0&0&1&1&1&1&0&0&1&1\\ 
0&0&1&0&0&1&0&1&0&1&0&1&1&1&0&1&1&1&0&0&1&0&0&1&0&1&0&1&1&1&0&1&0&1&0\\ 
0&0&1&0&0&0&1&1&0&1&0&1&1&0&1&1&1&0&1&0&1&0&0&1&0&0&1&1&1&1&0&0&1&0&1\\ 
0&0&0&1&1&0&0&0&1&1&1&0&0&1&1&1&1&1&1&0&0&1&1&0&1&0&0&1&0&0&1&1&1&1&1\\ 
0&0&0&1&0&1&0&0&1&0&1&1&0&1&1&1&0&1&1&0&0&1&0&0&1&1&0&0&1&0&1&1&1&1&0\\ 
0&0&0&1&0&0&1&0&1&0&1&0&1&1&1&0&1&1&1&0&0&1&0&0&1&0&1&0&0&1&1&1&1&0&1\\ 
0&0&0&0&1&1&0&0&0&1&1&1&0&1&0&1&1&1&1&0&0&0&1&0&0&1&0&1&1&0&1&1&0&1&1\\ 
0&0&0&0&1&0&1&0&0&1&1&0&1&0&1&1&1&1&1&0&0&0&1&0&0&0&1&1&0&1&1&0&1&1&1\\ 
\end{array} \right] $$
\end{tiny}
has $659$ entries $1$ which is $|V(G')|+2|E(G')|=35+2 \cdot 312$. The 
Green function $g=L^{-1}$ has entries $\{-2,-1,0,1,2\}$. We have ${\rm tr}(g)=43$
and ${\rm tr}(L)=35$ and $\eta(G)=35-43=-8$. The spectrum of $L$ has the 
convex hull $[-3.30278,20.0327]$. There are 16 negative and 19 positive
eigenvalues and 16 negative eigenvalues. The spectrum of $g$ has
the convex hull $[-3.30278,16]$. \\

When making a triple suspension $G=C_n + P_3$ over a larger circle we get a graph with 
$\eta_0(G)=-n$ and $\eta(G)=-2n$. By punching two small holes at vertices of degree 4 
into $G_1$, the graph can be rendered to be contractible with $\eta(G)=8-2n$. 
Attaching this using using wedge sum to an other graph illustrates
again that we can lower $\eta$ arbitrarily in any homotopy class of graphs. 

\bibliographystyle{plain}

\end{document}